\newtheorem{thm}{Theorem}[section]
\newtheorem{prop}[thm]{Proposition}
\newtheorem{lemma}[thm]{Lemma}
\theoremstyle{definition} 
\newtheorem{remark}[thm]{Remark}
\numberwithin{equation}{section}
\newcommand{\Fourier}{\mathcal{F}}
\newcommand{\fourier}[1]{\widehat{#1}}
\newcommand{\dm}{\,\mathrm{d}}
\newcommand{\dx}{\dm x}
\newcommand{\T}{\mathbb{T}}
\newcommand{\R}{\mathbb{R}}
\newcommand{\N}{\mathbb{N}}
\DeclareMathOperator{\supp}{supp}
\newcommand\norm[2][]{
	\ifstrempty{#1}{\left\| {#2} \right\|_{L^2} }{ \left\| {#2} \right\|_{L^{#1}} } }
\subjclass[2010]{76B15, 76B03, 35S30, 35A20}
\keywords{Whitham type,inhomogeneous,nonlocal,maximal height,water waves}
\title[Maximum height wave]{Waves of maximal height for a class of nonlocal equations with inhomogeneous symbols}
\author{Hung Le}
\address{Department of Mathematical Sciences, Norwegian University of Science and Technology, 7491 Trondheim, Norway}
\email{hung.le@ntnu.no}
\thanks{HL was supported by the ERCIM `Alain Bensoussan' Fellowship Programme.}
\date{\today}
\begin{document}
	
\begin{abstract}
	In this paper, we consider a class of nonlocal equations where the convolution kernel is given by a Bessel potential symbol of order $\alpha$ for $\alpha > 1$.  Based on the properties of the convolution operator, we apply a global bifurcation technique to show the existence of a highest, even, $2\pi$-periodic traveling-wave solution.  The regularity of this wave is proved to be exactly Lipschitz.  
\end{abstract}

\maketitle

\section{Introduction}

Our object of study is the following nonlocal equation
\begin{equation} \label{main ut eqn}
u_t + L u_x + u u_x = 0, \qquad (t,x) \in \R \times \R,
\end{equation}
where $L$ denotes the Fourier multiplier operator with symbol $m(\xi) = (1 + \xi^2)^{-\alpha/2}$, $\alpha > 1$.  Equation~\eqref{main ut eqn} is known as the \emph{fractional Korteweg--de Vries equation}.  We are looking for a $2\pi$-periodic traveling-wave solution $u(t,x) = \phi(x - \mu t)$, where $\mu > 0$ denotes the speed of the right-propagating wave.  In this moving frame, equation \eqref{main ut eqn} takes the form
\begin{equation} \label{main u eqn}
-\mu \phi + L \phi + \frac{1}{2} \phi^2 = 0,
\end{equation}
where the equation has been integrated once, and the constant of integration set to zero.  In fact, there is no loss of generality in doing so due to the Galilean change of variables:
\[
\phi \mapsto \phi + \gamma, \quad \mu \mapsto \mu + \gamma, \quad B \mapsto B + \gamma \left( 1 - \mu - \frac{1}{2} \gamma \right)
\]
for any $\gamma \in \R$, which maps solutions of $-\mu \phi + L \phi + \frac{1}{2} \phi^2 = B$ to solutions of a new equation of the same form.  In this paper, the above multiplier $m(\xi)$ has order of $-\alpha$, which is less than $-1$.  This type of dispersive terms tends to behave smoothly at far field and worse at the origin.  Solutions to the type of equation \eqref{main ut eqn} with negative orders feature wave breaking and peaked periodic waves; see discussions in \cite{Ehrnstrom_Wang2020,Geyer_Pelinovsky2019}.  It should be mentioned that for a linear operator of order strictly smaller than $-1$, the regularity of the highest, periodic, traveling-wave solution does not depend on the order operator; see Theorem \ref{Thm:cannot-C1}.  Instead, the dispersive term may affect the angle at the wave crest.

Investigation on highest waves for various types of water-waves equations has a long history.  One of the earliest works to study peaked waves was Stokes, who \cite{Stokes1880} conjectured that for a fixed wavelength and the gravitational acceleration, the Euler equation has a highest, periodic traveling-wave with a sharp crest of angle $2\pi/3$.  However, attempts to prove the Stokes conjecture had failed until the work by Amick, Fraekel, and Toland~\cite{Amick_Fraenkel_Toland1982}.  Whitham~\cite{Whitham1974} also made an assumption that the solution $\phi$ to the Whitham equation reached its critical height when $\phi = \mu$ (the notation has been changed to match equation~\eqref{main u eqn}).  Recently, Ehrnstr\"om and Wahl\'en~\cite{Ehrnstrom_Wahlen2019} confirmed the existence and regularity of a highest and periodic traveling-wave solution for the Whitham equation, and hence successfully proved Whitham conjecture.  This solution features a sharp crest and is exactly $C^{1/2}$-H\"older continuous at the peak and smooth between its crest and trough on the half-period.  After this breakthrough work, there have been an increasing number of studies on nonlocal equations using their approach.  One of the earliest papers in this direction is to justify the convexity of the wave profile between consecutive crests using a computer-assisted proof \cite{Encisco_Gomez_Vergara2018}.  A similar result as in \cite{Ehrnstrom_Wahlen2019} but for solitary waves was also obtained by Truong, Wahl\'en, and Wheeler \cite{Truong_Wahlen_Wheeler2020} using a center manifold theorem approach.  Moreover, Ehrnstr\"om, Johnson, and Claasen~\cite{Ehrnstrom_Johnson_Claassen2019} proved the existence and regularity of a highest wave for the bidirectional Whitham equation with cubic nonlinearity and a Fourier multiplier symbol
$m(\xi) = \tanh(\xi) / \xi$.  They showed that the highest cusped wave has a singularity at its crest of the form $|x \log(|x|)|$.  Utilizing the same approach as in \cite{Ehrnstrom_Wahlen2019}, Arnesen~\cite{Arnesen2019} considered the Degasperis--Procesi equation.  This equation has a nonlocal form with a quadratic nonlinearity and a Fourier multiplier symbol $m(\xi) = (1 + \xi^2)^{-1}$.  Using the method in \cite{Ehrnstrom_Wahlen2019}, Arnesen constructed a family of highest, peaked, periodic traveling waves and showed that any highest, even, periodic wave of the Degasperis--Procesi equation is exactly Lipschitz at its crest.  The author, therefore, excluded the existence of even, periodic, cusped traveling-wave solutions.  

Since the unidirectional Whitham equation is nonlocal, the equation~\eqref{main ut eqn} is obtained, up to a scaling factor, by replacing the Whitham dispersion relation by a more regular dispersive term.  Building on the integral kernel $K$, we prove the existence and regularity properties of a highest, $2\pi$-periodic traveling-wave solution of equation \eqref{main u eqn}.  Our most direct influence are the works by Ehrnstr\"om and Wahl\'en~\cite{Ehrnstrom_Wahlen2019} and by Bruell and Dhara~\cite{Bruell_Dhara2018}.  In the former work, the investigation started with the Whitham equation $$u_t + 2 uu_x + Lu_x = 0,$$ where $L$ is the Fourier multiplier operator given by the symbol $$m(\xi) = \sqrt{\frac{\tanh\xi}{\xi}}.$$  This multiplier has order greater than $-1$, is inhomogeneous and completely monotone.  The main result is the existence of a highest, cusped, $P$-periodic traveling-wave solution, which is even, strictly decreasing, smooth on each half-period, and belongs to the H\"older space $C^\frac{1}{2}(\R)$.  The proof is based on the regularity and monotonicity properties of the convolution kernel induced by $m$.  On the other hand, Bruell and Dhara~\cite{Bruell_Dhara2018} considered the fractional KdV equation as in equation~\eqref{main ut eqn}, where $L$ is the Fourier multiplier operator given by the homogeneous symbol 
\[
m(\xi) = |\xi|^{-\alpha}, \qquad \alpha > 1.
\]
This multiplier has order less than $-1$, is homogeneous, and completely monotone.  The authors showed the existence of a highest, $2\pi$-periodic traveling waves solution, which is Lipschitz continuous.  Their method is based on the nonlocal approach in the paper~\cite{Ehrnstrom_Wahlen2019} but adapt to treat homogeneous symbols.  The convolution kernel associated with this homogeneous symbol cannot be identified with a positive, decaying function on the real line.  To compensate for this lack of positivity, solutions are assumed to have the zero mean, and hence the constant of integration $B$ in equation~\eqref{main u eqn} now becomes $\fourier{\phi}(0)$.  

In the present paper, we will combine the approach in \cite{Ehrnstrom_Wahlen2019} to treat inhomogeneous symbols, and the derivation for an estimate of a priori traveling-wave solution to nonlocal equations as in \cite{Bruell_Dhara2018}.  We recall that the symbol $m$ for the fractional KdV equation~\eqref{main ut eqn} is Bessel potential function, which is inhomogeneous and of order strictly smaller than $-1$.  We also do not make the assumption of zero mean solution as in \cite{Bruell_Dhara2018}.  Since the kernel operator is positive, we choose the constant of integration in equation~\eqref{main u eqn} to be $B = 0$.  Moreover, even though our kernel $K$ is even and smooth for all $x \in \R$ with rapid decay derivatives, we do not use a closed formula for $K$ in the physical space, as well as for its periodization as in \cite{Bruell_Dhara2018,Ehrnstrom_Wahlen2019}.  It is also worth to mention that the kernel $K$ does not exhibit the completely monotone feature, which plays a main role for the analysis in \cite{Bruell_Dhara2018,Ehrnstrom_Wahlen2019}.  Instead, our investigation only relies on its qualitative behaviors such as positivity of $K$ and the signs of its derivatives; see Section~\ref{Section:Setting:K*acts:period} and \ref{Section:K-properties}.  The properties of the convolution kernel are proved in the ongoing work by Bruell, Ehrnstr\"om, Johnson, and Wahl\'en \cite{Bruell_Ehrnstrom_Johnson_Wahlen}.  

We now briefly discuss the strategies we used to derive the results.  We will be using a global bifurcation theory as in the paper \cite{Buffoni_Toland2003} to build a global, locally analytic curve of periodic smooth waves where the solution maximum approaches $\mu$.  We further show that the only possibility along the main bifurcation curve is that $\max \phi$ must approach $\mu$.  Moreover, we investigate the solution $\phi$ satisfying $\max \phi = \mu$.  Using the properties of the convolution operator $K$, we obtain Lipschitz continuous at its crest.

Let us formulate our main existence theorem which constructs solutions bifurcating from nontrivial, even, smooth, and $2\pi$-periodic traveling-wave solutions of equation~\eqref{main ut eqn}.  At the end of the bifurcation curve, the limiting solution reaches its highest and is precisely Lipschitz continuous.

\begin{thm}[Main theorem] \label{Thm:main}
	For each integer $k \ge 1$, there exists a wave speed $\mu^*_k > 0$ and a global bifurcation branch:
	\[
	s \mapsto (\phi_k(s), \mu_k(s)), \qquad s > 0,
	\]
	of nontrivial, $\frac{2\pi}{k}$-periodic, smooth, even solutions to the steady equation \eqref{main u eqn} for $\alpha > 1$, emerging from the bifurcation point $(0, \mu_k^*)$.  Moreover, given any unbounded sequence $(s_n)_{n \in \N}$ of positive numbers $s_n$, there exists a subsequence of $(\phi_k(s_n))_{n \in \N}$, which converges uniformly to a limiting traveling-wave solution $(\bar \phi_k, \bar \mu_k)$ that solves \eqref{main u eqn} and satisfies
	\[
	\bar \phi_k(0) = \bar \mu_k.
	\]
	The limiting wave is strictly increasing on $\left(-\frac{\pi}{k}, 0 \right)$ and exactly Lipschitz at $x \in \frac{2\pi}{k} \mathbb{Z}$.
\end{thm}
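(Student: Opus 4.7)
The plan is to follow the analytic global bifurcation, extraction, and crest-regularity program of \cite{Ehrnstrom_Wahlen2019,Bruell_Dhara2018}, suitably adapted to the inhomogeneous Bessel potential kernel $K$. First, I would recast \eqref{main u eqn} as $F(\phi,\mu) := -\mu\phi + L\phi + \tfrac{1}{2}\phi^2 = 0$ on a Banach space of even, $\tfrac{2\pi}{k}$-periodic functions (for instance $C^{2,\beta}$). Because $m$ has order $-\alpha<-1$, $L$ is smoothing and hence compact on this space, so $D_\phi F(0,\mu) = -\mu + L$ is Fredholm of index zero. The cosine modes $\cos(jkx)$ diagonalize $L$ with eigenvalues $(1+(jk)^2)^{-\alpha/2}$, strictly decreasing in $j$, so $\mu_k^* := (1+k^2)^{-\alpha/2}$ gives a one-dimensional kernel spanned by $\cos(kx)$; the Crandall--Rabinowitz transversality condition is then immediate and yields a local analytic branch $(\phi_k(s),\mu_k(s))$ emerging from $(0,\mu_k^*)$.

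Next, I would apply the analytic global bifurcation theorem of \cite{Buffoni_Toland2003} to extend this local branch to a maximal continuum. The nodal pattern (even, strictly monotone on $(-\pi/k,0)$, with $\phi \le \mu$ pointwise) is preserved along the continuum through propagation-of-signs arguments rooted in the positivity of $K$ and pointwise evaluation of \eqref{main u eqn} at critical points. The abstract alternatives of the theorem reduce, after ruling out loop closure via nodal preservation, to two possibilities: either the continuum becomes unbounded in norm, or $\max\phi_k(s) \to \mu_k(s)$. A priori bounds obtained by evaluating \eqref{main u eqn} at the extrema --- in particular $\tfrac{1}{2}\phi_{\max}^2 = \mu\phi_{\max} - (K*\phi)(x_{\max})$ --- combined with the smoothing action of $L$ produce uniform estimates in an intermediate H\"older class; hence the unbounded alternative can only be realized through $\max\phi_k(s)\to\mu_k(s)$.

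To extract the limiting wave, given an unbounded sequence $(s_n)$, these uniform estimates give precompactness in a suitable H\"older topology via Arzel\`a--Ascoli, producing a uniform limit $(\bar\phi_k,\bar\mu_k)$ that solves \eqref{main u eqn} and satisfies $\bar\phi_k(0)=\bar\mu_k$. The crest regularity follows from the algebraic identity obtained by evaluating \eqref{main u eqn} at $x=0$ and at arbitrary $x$,
\[
\tfrac{1}{2}\bigl(\bar\mu_k-\bar\phi_k(x)\bigr)^2 = (K*\bar\phi_k)(0) - (K*\bar\phi_k)(x),
\]
whose right-hand side is analyzed via the sign and integrability properties of $K, K', K'',\ldots$ established in Section~\ref{Section:K-properties} and in \cite{Bruell_Ehrnstrom_Johnson_Wahlen}. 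A two-sided comparison of this quantity with $x^2$ converts, through the identity above, to the \emph{exact} Lipschitz regularity of $\bar\phi_k$ at $0$; Theorem~\ref{Thm:cannot-C1} then excludes $C^1$ behavior, confirming sharpness.

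The principal obstacle is precisely this sharp two-sided control of $(K*\bar\phi_k)(0)-(K*\bar\phi_k)(x)$ near the crest. In \cite{Ehrnstrom_Wahlen2019,Bruell_Dhara2018}, complete monotonicity or closed-form representations of $K$ provide the necessary quantitative estimates almost directly, whereas here $K$ is neither homogeneous nor completely monotone, so the analogous bounds must be extracted from qualitative sign and integrability information on $K$ and its derivatives alone. Ensuring that these estimates are uniform along the bifurcation branch, so that they persist in the limit and rule out both a cusp (too fast vanishing) and smoother behavior (too slow vanishing) at the crest, is the technical heart of the proof.
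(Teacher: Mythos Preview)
Your proposal is correct and follows essentially the same route as the paper: analytic local bifurcation via Crandall--Rabinowitz, global continuation via Buffoni--Toland with cone preservation to exclude the periodic alternative, reduction of the remaining alternatives to $\max\phi\to\mu$ through the smoothing of $L$, extraction of a uniform limit, and the two-sided crest estimate built on the identity $\tfrac12(\mu-\phi)^2=(K*\phi)(0)-(K*\phi)(x)$. The one step you underplay is bounding $\mu_k(s)$ \emph{away from} $0$ along the branch so that the limiting pair is nondegenerate; in the paper this is handled separately (Lemma~\ref{Lem:mu-phi:ge:lambdaP} and Lemma~\ref{Lem:mu(s) gtrsim 1}) and does not fall out of the extremum evaluations alone.
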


The outline of our study is as follows.  In Section~\ref{Section:Setting:K*acts:period} we introduce notations and functional-analytic settings that we encounter in the analysis.  We also inspect the convolution kernel $K$ corresponding to the symbol $m(\xi)$ and show that the action of the operator $K$ on periodic functions on the real line is in fact similar to the \emph{periodized kernel} $K_P$ acts on these functions in one period.  In Section~\ref{Section:K-properties}, we discuss some interesting analytic features of the periodized kernel $K_P$.  In fact, both $K$ and $K_P$ share many properties.

Section~\ref{Section:Apriori-properties:soln} is the heart of this paper.  We provide some general estimates of an even solution to equation~\eqref{main u eqn}.  In particular, using the regularity and monotonicity properties of the convolution kernel $K_P$, we prove that if a traveling-wave solution, which is even, periodic, and monotone on half the period, attains its maximum equal to the wave speed $\mu$, then it is Lipschitz continuous.  Finally, Section~\ref{Section:Global:bifurcation} proved our main theorem, Theorem~\ref{Thm:main}.  It consists of the bifurcation analysis from  \cite{Bruell_Dhara2018,Ehrnstrom_Kalisch2013,Ehrnstrom_Kalisch2009}, in which we rule out certain possibilities for the bifurcation curve.  We, therefore, construct a global curve of sinusoidal, periodic smooth waves along which $\max \phi \to \mu$, and an analysis of solutions at their maximum $\max\phi = \mu$.  

\section{Functional-analytic setting and general conventions}
\label{Section:Setting:K*acts:period}

Setting $\T := [-\pi, \pi]$, we denote by $\mathcal{D}(\T)$ the space of test functions on $\T$, whose dual space, the space of distributions on $\T$, is $\mathcal{D}'(\T)$.  If $\mathcal{S}(\mathbb{Z})$ is the space of rapidly decaying functions from $\mathbb{Z}$ to $\mathbb{C}$ and $\mathcal{S}'(\mathbb{Z})$ denotes its dual space, let $\Fourier : \mathcal{D}'(\T) \to \mathcal{S}'(\mathbb{Z})$ be the Fourier transformation on the torus defined by the duality on $\mathcal{D}(\T)$ via
\[
\Fourier f(\xi) = \fourier f(\xi) := \int_{\mathbb{T}}f(x) \exp(-ix\xi) \dx, \qquad f \in \mathcal{D}(\T).
\]
To simplify our notations, throughout this paper, if $f$ and $g$ are elements in an ordered Banach space, we write $f \lesssim g$ $(f \gtrsim g)$ if there exists a constant $c > 0$ such that $f \le cg$ $(f \ge cg)$.  Moreover, the notation $f \eqsim g$ is used whenever $f \lesssim g$ and $f \gtrsim g$.  We also denote $\R_+ := [0, \infty]$ and by $\N_0$ the set of natural numbers including zero.  The space $\mathcal{L}(X; Y)$ denotes the set of all bounded linear operators from $X$ to $Y$.  

Next, let us investigate how the convolution $K *$ acts on periodic functions.  Suppose that $f \in L^\infty(\R)$ is periodic and even.  Since $K$ is in $L^1(\mathbb{R})$, we can write:
\begin{align*}
\int_{-\infty}^\infty K(x-y) &f(y) \dm y = \sum_{n=-\infty}^{\infty} \int_{-\pi}^\pi K(x - y + 2n\pi) f(y) \dm y \\
&= \int_{-\pi}^\pi \left( \sum_{n=-\infty}^\infty K(x - y + 2n\pi) \right) f(y) \dm y
	=: \int_{-\pi}^\pi K_P(x - y) f(y) \dm y.
\end{align*}
It is clear from the definition of $K_P(x)$ that it is $2\pi$-periodic, even, and continuous on $\T$.  Moreover, Minkowski's inequality shows that $K_P(x)$ belongs to $L^p(-\pi, \pi)$, for $1 \le p < 2$.  Therefore, Carleson--Hunt Theorem~\cite{Carleson_Hunt1982} implies that $K_P(x)$ can be approximated pointwise by its Fourier series:
\[
K_P(x) = \frac{1}{2\pi} \sum_{n \in \mathbb{Z}} \fourier A_n \exp(inx ), \quad a.e.,
\]
and the Fourier coefficients of $K_P$ are given by
\begin{align} \begin{split}
\fourier A_n &= \int_{-\pi}^\pi \sum_{j=-\infty}^\infty K(x + 2j\pi) \exp(-ixn) \dm x \\
&= \sum_{j=-\infty}^\infty \int_{-\pi}^\pi K(x + 2j\pi) \exp(-i(x + 2j\pi)n) \dx \\
&= \int_{-\infty}^\infty K(x) \exp(-ixn) \dx = \fourier{K}(n).
\end{split} \end{align}
Thus, the periodic problem is given by the same multiplier as the problem on the line, so we have the representation:
\[
K * f(x) = \frac{1}{2\pi} \sum_{n \in \mathbb{Z}} \fourier{f}(n) \fourier{K}(n) \exp(inx)
= \frac{1}{2\pi} \sum_{n \in \mathbb{Z}} \fourier{f}(n) \, m(n) \exp(inx).
\]
The \emph{periodized operator} $K_P$ is introduced to aid the analysis of periodic solutions satisfying certain sign conditions in a half-period.  From the above analysis, $K_P$ can be expressed as the Fourier series
\begin{equation} \label{K:Fourier:representation}
K_P(x) = \frac{1}{2\pi} \sum_{n \in \mathbb{Z}} m(n) \exp(inx). 
\end{equation}
and the relationship between $K$ and $K_P$ is
\begin{equation} \label{K:Kp:relationship}
K_P(x) = \sum_{n \in \mathbb{Z}} K(x + 2n\pi).
\end{equation}
Note that this sum is absolutely convergent since $K$ has rapid decay.

\subsection*{The operator $L$}

Now let $L$ be the operator
\[
L: f \mapsto K * f,
\]
defined via duality on the space $\mathcal{S}'(\R)$ of tempered distributions.  Then one can see that for a continuous periodic function $f$, the operator $L$ is given by 
\[
\int_{-\pi}^\pi K_P(x - y) f(y) \dm y,
\]
and more generally by 
\[
\int_\R K(x - y) f(y) \dm y
\]
if $f$ is bounded and continuous.  Thus, we have the following formula for $L$
\begin{equation} \label{L:formula}
Lf(x) = \frac{1}{2\pi} \sum_{n \in \mathbb{Z}} m(n) \fourier f(n) \exp(inx). 
\end{equation}

Next, we shall say that a function $\varphi: \R \to \R$ is \emph{H\"older continuous of regularity} $\beta \in (0,1)$ \emph{at a point} $x \in \R$ if
\[
|\varphi|_{C^\beta_x} := \sup_{h \ne 0} \frac{|\varphi(x + h) - \varphi(x)}{|h|^\beta} < \infty,
\]
and let
\begin{align*}
C^\beta(\R) &= \{ \varphi \in C(\R): \sup_x |\varphi|_{C^\beta_x} < \infty \}
\end{align*}
be the space of $\beta$-H\"older continuous functions on $\R$.  If $k \in \N$, then
\[
C^{k, \beta}(\R) = \{ \varphi \in C^k(\R): \varphi^{(k)} \in C^\beta(\R) \}
\]
denotes the space of $k$-times continuously differentiable functions whose $k$-th derivative is $\beta$-H\"older continuous on $\R$.

Before we proceed to discuss the space of the operator $L$, let us recall the definition of Besov spaces $B^s_{p,q}(\R)$ using the Littlewood--Paley decomposition.  Let $\{\gamma_j\}_{j \in \N}$ be a family of smooth and compactly supported functions, where 
\begin{gather*}
\supp \gamma_0 \subset [-2, 2], \qquad \supp \gamma_j \subset \{ \xi \in \R : 2^{j-1} \le |\xi| \le 2^{j+1} \} \qquad \text{for} \quad j \ge 1, \\
\sum_{j=0}^\infty \gamma_j(\xi) = 1
\end{gather*}
for all $\xi \in \R$.  
For a tempered distribution $f \in \mathcal{S}'(\R)$, we let $\gamma_j(D) f = \Fourier^{-1}(\gamma_j(\xi) \fourier{f}(\xi))$, so that
\[
f = \sum_{j=0}^\infty \gamma_j(D) f.
\]
Then the Besov spaces $B^s_{p,q}(\R)$, $s \in \R$, $1 \le p \le \infty$, $1 \le q < \infty$ are defined by
\[
\left\{ f \in \mathcal{S}'(\R): \|f\|_{B^s_{p,q}(\R)} := \left[ \sum_{j=0}^\infty (2^{sj} \|\gamma_j(D) f \|_{L^p(\R)})^q \right]^{1/2} < \infty \right\},
\]
and for $1 \le p \le \infty$ and $q = \infty$, we instead define
\[
B^s_{p,q}(\R) = \left\{ f \in \mathcal{S}'(\R): \|f\|_{B^s_{p,\infty}(\R)} := \sup_{j \ge 0} 2^{sj} \|\gamma_j(D) f\|_{L^p(\R)} < \infty \right\}.
\]
For a $2\pi$-periodic tempered distribution $f = \frac{1}{2\pi} \sum_{k \in \mathbb{Z}} \fourier{f}_k \exp(ik\cdot)$, we have the identity
\[
\gamma_j(D) f = \frac{1}{\pi} \sum_{k \in \mathbb{Z}} \gamma_j (k) \fourier{f}_k \exp(ikx),
\]
so that $\gamma_j(D) f$ is a trigonometric polynomial.  

Moreover, for $s \in \R$, we define the Zygmund spaces $\mathcal{C}^s$ of order $s$ by
\[
\mathcal{C}^s(\T) = B^s_{\infty,\infty}(\T),
\]
and recall that when $s$ is a positive noninteger, we have $\mathcal{C}^s = C^{\lfloor s \rfloor, s - \lfloor s \rfloor}$;  and when $s$ is a nonnegative integer, we have $W^{s, \infty} \subsetneq \mathcal{C}^s$.  
As a consequence of Littlewood--Paley theory, we have the relation $\mathcal{C}^s(\T) = C^s(\T)$ for any positive noninteger $s$.  Thus, the H\"older spaces on the torus are completely characterized by Fourier series.  It $s \in \mathbb{N}$, then $C^s(\T)$ is a proper subset of $\mathcal{C}^s(\T)$ and
\[
C^1(\T) \subsetneq C^{1-}(\T) \subsetneq \mathcal{C}^1(\T),
\]
where $C^{1-}(\T)$ denotes the space of Lipschitz continuous functions on $\T$.

To finish this section, we discuss the space of the operator $L$.  It follows from the well-known Bessel potential estimate
\[
|D_\xi^n m(\xi)| \lesssim (1 + |\xi|)^{-\alpha - n}, \quad n \ge 0,
\]
that $L$ defines a bounded operator
\[
L: B^s_{p,q}(\T) \to B^{s + \alpha}_{p,q}(\T), 
\]
see, for example, \cite{Arendt_Bu2004,Bahouri_Chemin_Danchin2011}.  In particular, the operators
\[
L: \mathcal{C}^s(\T) \to \mathcal{C}^{s + \alpha}(\T)
\]
is bounded on $\T$.  Thus, $L$ is a smoothing operator of order $-\alpha < -1$.

\section{Properties of the periodized kernel $K_P$} \label{Section:K-properties}

This section is devoted to summarize some properties of the kernel $K_P$.  We recall the Fourier representation of $K_P$ \eqref{K:Fourier:representation} and the relationship \eqref{K:Kp:relationship} between $K$ and $K_P$.  Recalling that $K = \Fourier^{-1}(m)$ and the Fourier multiplier is given by the Bessel potential $m(\xi) = (1 + \xi^2)^{-\alpha/2}$ for $\alpha > 1$, we record some properties of the periodized operator $K_P$.
\begin{thm} \label{Thm:K-Properties}
	Let $\alpha > 1$.  The kernel $K_P$ has the following properties:
	\begin{enumerate}[label=(\alph*)]
		\item $K_P$ is even, positive, $2\pi$-periodic, and continuous.  $K_P$ is smooth on $\T \backslash \{0\}$.
		\item $K_P$ is in $W^{1,1}(\T)$.  In particular, $K_P'$ is integrable.  Moreover, $K_P$ is H\"older continuous with $\beta \in (0, \alpha-1)$ if $\alpha \in (1, 2)$, Lipschitz continuous when $\alpha=2$, and continuously differentiable if $\alpha > 2$.
		\item $K_P$ is decreasing on $(0, \pi)$.
	\end{enumerate}
\end{thm}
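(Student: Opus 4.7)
The plan is to derive every property of $K_P$ from the periodization identity \eqref{K:Kp:relationship} together with a handful of qualitative facts about the Bessel potential kernel $K$ on $\R$, all supplied by \cite{Bruell_Ehrnstrom_Johnson_Wahlen}: $K$ is even and strictly positive; $K$ is smooth on $\R \setminus \{0\}$ with every derivative of Schwartz decay; $K$ is continuous on $\R$ for $\alpha > 1$, with $K(x) = K(0) + c_\alpha |x|^{\alpha - 1} + o(|x|^{\alpha - 1})$ near the origin when $\alpha \in (1, 2)$ and $|K'(x)| \lesssim |x|^{\alpha - 2}$ near the origin in general; and $K$ is strictly decreasing and strictly convex on $(0, \infty)$. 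With these inputs in hand, part (a) is immediate: evenness and $2\pi$-periodicity follow from \eqref{K:Kp:relationship} and evenness of $K$, positivity from $K > 0$, and for continuity on $\T$ and smoothness on $\T \setminus \{0\}$ one isolates the $n = 0$ term and observes that on any compact $\Omega \subset \T \setminus \{0\}$ the Schwartz decay of $K^{(j)}$ yields a uniform-in-$n$ majorant for $K^{(j)}(\cdot + 2n\pi)$, so the tail series may be differentiated term by term to any order.

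For part (b), the bound $|K'(x)| \lesssim |x|^{\alpha - 2}$ is integrable at $0$ since $\alpha > 1$, and $K'$ is Schwartz at infinity, so $K' \in L^1(\R)$. Integrating $|K_P'| \le \sum_n |K'(\cdot + 2n\pi)|$ term by term over $\T$ then yields $K_P \in W^{1,1}(\T)$. Since the tail $K_P - K = \sum_{n \ne 0} K(\cdot + 2n\pi)$ is smooth near $0$ by the argument for (a), the modulus-of-continuity class of $K_P$ at the origin coincides with that of $K$; this produces the H\"older/Lipschitz/$C^1$ trichotomy directly from the three regimes of the asymptotic expansion of $K$ at $0$.

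For part (c), I differentiate \eqref{K:Kp:relationship} and pair each term on one side of the origin with its mirror image on the other. Using $K'(-y) = -K'(y)$ to rewrite the $n \le -1$ terms, for $x \in (0, \pi)$ one gets
\[
K_P'(x) = \sum_{n \ge 0} \bigl[ K'(x + 2n\pi) - K'(2(n+1)\pi - x) \bigr].
\]
For each $n \ge 0$ and $x \in (0, \pi)$ the inequality $0 < x + 2n\pi < 2(n+1)\pi - x$ holds, and since $K'$ is negative and strictly increasing on $(0, \infty)$, every bracket is strictly negative; hence $K_P'(x) < 0$ and $K_P$ is strictly decreasing on $(0, \pi)$.

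The main obstacle is not the periodization itself, which is routine, but the strict convexity of $K$ on $(0, \infty)$ needed in (c): this is not a formal consequence of inhomogeneity or of the order $-\alpha$ and must be extracted from a direct analysis of the Bessel integral representation of $K$. Once that qualitative input, together with the pointwise asymptotics near $0$ used in (b), is available, the pairing step above is the only structural observation needed to transfer behaviour from $K$ to $K_P$.
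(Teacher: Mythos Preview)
Your proposal is correct and, for parts (a) and (b), follows essentially the same line as the paper: both arguments take the known properties of the Bessel potential kernel $K$ on $\R$ and push them through the periodization identity \eqref{K:Kp:relationship}, noting that the tail $\sum_{n\neq 0}K(\cdot+2n\pi)$ is smooth near the origin so that the local singularity structure of $K_P$ coincides with that of $K$. The paper cites Grafakos~\cite{Grafakos2014} for the relevant properties of $K$, while you source them from \cite{Bruell_Ehrnstrom_Johnson_Wahlen}; either is fine.

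For part (c) there is a genuine, if modest, difference. The paper does not argue at all: it simply defers the monotonicity of $K_P$ on $(0,\pi)$ to the forthcoming work \cite{Bruell_Ehrnstrom_Johnson_Wahlen}. You instead extract from that reference only the strict convexity (and strict decrease) of $K$ on $(0,\infty)$ and then supply the reduction yourself via the pairing
\[
K_P'(x)=\sum_{n\ge 0}\bigl[K'(x+2n\pi)-K'(2(n+1)\pi-x)\bigr],
\]
where each bracket is negative because $K'$ is negative and strictly increasing on $(0,\infty)$. This is a cleaner dependence: you isolate exactly which qualitative fact about $K$ drives the monotonicity of $K_P$, rather than importing the conclusion wholesale. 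The trade-off is that you must be sure the cited reference actually provides strict convexity of $K$; as the paper emphasizes in the introduction, $K$ is \emph{not} completely monotone here, so convexity is not automatic and does require the direct Bessel-integral analysis you allude to in your closing remark.
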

\begin{proof}
	We first observe that this theorem holds when we replace $K_P$ by $K$; see, for example, by Grafakos~\cite{Grafakos2014}.  Due to the Fourier representation~\eqref{K:Fourier:representation} and the relation~\eqref{K:Kp:relationship}, the evenness, positivity, periodicity, and regularity properties of $K_P$ are inherited from $K$.  By \cite[Proposition 1.2.5]{Grafakos2014}, we know that $K$ is smooth on $\T \backslash \{0\}$, and so is $K_P$ by the relation~\eqref{K:Kp:relationship}.  Moreover, the regularity of $K_P$ in part (b) follows from the regularity of $K$ in \cite[Section 1.3]{Grafakos2014}.  Part (c), however, is proved in the ongoing work by Bruell, Ehrnstr\"om, Johnson, and Wahl\'en~\cite{Bruell_Ehrnstrom_Johnson_Wahlen}.
\end{proof}

We know from the above theorem that $K_P$ is even and decreasing on $(0, \pi)$.  The next lemma provides monotonicity property for the operator $L = K*$ based on $K_P$.
\begin{lemma} \label{Lemma:L:monotone}
	Let $\alpha > 1$.  The operator $L$ is parity preserving on $L^\infty(\T)$.  Moreover, if $f, g \in L^\infty(\T)$ are odd functions satisfying $f(x) \ge g(x)$ on $[0,\pi]$, then either
	\[
	Lf (x) > Lg(x) \qquad \text{for all } \quad x \in (0,\pi),
	\]
	or $f = g$ on $\T$.  
\end{lemma}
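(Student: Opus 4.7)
My plan is to use the integral representation $Lf(x) = \int_{-\pi}^\pi K_P(x-y) f(y)\dm y$ for $2\pi$-periodic $f$, established in Section~\ref{Section:Setting:K*acts:period}, together with the evenness, $2\pi$-periodicity, and monotonicity of $K_P$ on $(0,\pi)$ from Theorem~\ref{Thm:K-Properties}. Parity preservation is then immediate: the substitution $y \mapsto -y$ combined with the evenness of $K_P$ yields $Lf(-x) = \int_{-\pi}^\pi K_P(x - y) f(-y)\dm y$, so $Lf$ inherits the parity of $f$.

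For the second statement, I would set $h := f - g$, which is odd with $h \ge 0$ on $[0,\pi]$. Splitting the defining integral at $0$ and using the oddness of $h$ to fold the $(-\pi, 0)$-piece onto $(0, \pi)$ yields the symmetrized representation
\[
Lh(x) = \int_0^\pi \bigl[ K_P(x - y) - K_P(x + y) \bigr]\, h(y)\dm y.
\]
The crucial step is that the bracketed kernel difference is \emph{strictly positive} whenever $x, y \in (0,\pi)$. By evenness, $K_P(x-y) = K_P(|x-y|)$ with $|x-y| \in [0,\pi)$. If $x+y \le \pi$, the obvious bound $|x-y| < x+y$ together with strict monotonicity of $K_P$ on $(0,\pi)$ settles the inequality. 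If $x+y > \pi$, periodicity and evenness give $K_P(x+y) = K_P(2\pi - x - y)$ with $2\pi - x - y \in (0,\pi)$, and the identity $|x-y| + (x+y) = 2\max(x,y) < 2\pi$ shows $|x-y| < 2\pi - x - y$, so strict monotonicity again applies.

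With this kernel positivity in hand, $Lh \ge 0$ on $(0,\pi)$. Either $Lh(x) > 0$ throughout $(0,\pi)$, which is the first alternative, or there exists $x_0 \in (0,\pi)$ with $Lh(x_0) = 0$; in that case the integrand above is nonnegative with a strictly positive kernel factor, forcing $h = 0$ almost everywhere on $(0,\pi)$, and oddness then gives $h = 0$ on $\T$.

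The main obstacle is the kernel-positivity step in the regime $x + y > \pi$, where the evenness and periodicity of $K_P$ must be combined and where I must rely on \emph{strict} (rather than merely weak) monotonicity in Theorem~\ref{Thm:K-Properties}(c). If only weak monotonicity is available from that theorem, I would supplement with the Fourier-side data $\fourier{K_P}(n) = m(n) > 0$ from \eqref{K:Fourier:representation} to rule out plateau behaviour of $K_P$ and recover the strict dichotomy in the statement.
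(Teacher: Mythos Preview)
Your proposal is correct and follows essentially the same approach as the paper: both reduce to the symmetrized representation $Lh(x) = \int_0^\pi [K_P(x-y)-K_P(x+y)]\,h(y)\dm y$ for the odd difference $h=f-g$ and then invoke the strict positivity of the bracketed kernel on $(0,\pi)^2$. Your case split according to whether $x+y\le\pi$ or $x+y>\pi$ makes explicit what the paper leaves as a one-line appeal to ``evenness, periodicity, and monotonicity of $K_P$,'' and your closing remark on strict versus weak monotonicity is a legitimate caveat that the paper does not spell out.
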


\begin{proof}
	To see that $L$ is parity-preserving, let $f \in L^\infty(\T)$ be an odd function.  Then since $K$ is even, we have
	\begin{align*}
	Lf(x) + Lf(-x) &= \int_{-\pi}^\pi K_P(x - y) f(y) \dm y + \int_{-\pi}^\pi K_P(-x - y) f(y) \dm y \\
	&= \int_{-\pi}^\pi K_P(x - y) (f(y) + f(-y)) \dm y = 0,
	\end{align*}
	which shows that $Lf$ is odd.  Similarly, when $f$ is even, $Lf$ is also even.  
	
	To show the second part of the lemma, let $f, g \in L^\infty(\T)$ be odd functions such that $f(x) \ge g(x)$ on $[0,\pi]$.  By contradiction, assume that there exists $x_0 \in (0, \pi)$ such that $Lf(x_0) = Lg(x_0)$.  Then we compute
	\begin{equation*} \begin{split} \label{int:K(x-y),K(x+y):L:preserving}
	Lf(x_0) - Lg(x_0) &= \int_{-\pi}^\pi K_P(x_0 - y) (f(y) - g(y)) \dm y \\
	&= \int_0^\pi (K_P(x_0 - y) - K_P(x_0 + y)) (f(y) - g(y)) \dm y.
	\end{split} \end{equation*}
	Since $0 < x_0, y < \pi$, we have
	\[
	-\pi < x_0 - y < x_0 + y < 2\pi, \quad -\pi < x_0 - y < \pi, \quad 0 < x_0 + y < 2\pi.
	\]
	Then the evenness, periodicity, and monotonicity of $K_P$ yield
	\[
	K_P(x_0 - y) - K_P(x_0 + y) > 0
	\]
	for all $x_0, y$ in the interval $(0,\pi)$, and therefore, $Lf(x_0) > Lg(x_0)$, which is a contradiction to our assumption $Lf(x_0) = Lg(x_0)$, unless $f = g$ on $\T$.  
\end{proof}

\section{A priori properties of periodic traveling-wave solutions} \label{Section:Apriori-properties:soln}

In this section, we provide some basic properties of solutions to the main equation~\eqref{main u eqn} such as its a priori estimate, monotonicity, and regularity.  When we say $\phi$ is a solution, $\phi$ must be real-valued, bounded, and satisfy equation~\eqref{main u eqn} pointwise.  Fixing $\alpha > 1$, to aid our analysis, we re-write equation~\eqref{main u eqn} as
\begin{equation} \label{eqv u eqn}
\frac{1}{2} (\mu - \phi)^2 = \frac{1}{2} \mu^2 - L \phi.
\end{equation}
Our first result is a rough bound for a solution according to the wave speed $\mu$.
\begin{lemma} \label{Lem:maxPhi:minPhi:estimate}
	Let $\phi \in C(\T)$ be solution of \eqref{eqv u eqn}.  Then if $\mu > 1$, then
	\begin{equation*} \label{tmp:ineq-xm}
	0 \le \phi(x_m) \le 2(\mu - 1) \le \phi(x_M)
	\end{equation*}
	and if $\mu \le 1$, then
	\begin{equation*} \label{tmp:ineq-xM}
	2(\mu - 1) \le \phi(x_m) \le 0 \le \phi(x_M),
	\end{equation*}
	where $\phi(x_m) := \min_{x \in \T} \phi(x)$ and $\phi(x_M) := \max_{x \in \T} \phi(x)$.  
\end{lemma}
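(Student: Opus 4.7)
My proof would rest on two trivial facts about the operator $L$: the kernel $K_P$ is pointwise positive (Theorem~\ref{Thm:K-Properties}(a)), and it has unit total mass, since
\[
\int_{-\pi}^{\pi} K_P(y)\dm y \;=\; \fourier K(0) \;=\; m(0) \;=\; 1.
\]
Combined, these give the elementary comparison
\[
\phi(x_m) \;\le\; L\phi(x) \;\le\; \phi(x_M) \qquad \text{for every } x\in\T.
\]
This is the only ``analytic'' input; everything else reduces to solving a quadratic inequality.

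First, I would evaluate equation~\eqref{eqv u eqn} at the maximum point $x_M$. Using $L\phi(x_M)\le\phi(x_M)$, I get
\[
\tfrac12(\mu-\phi(x_M))^2 \;=\; \tfrac12\mu^2 - L\phi(x_M) \;\ge\; \tfrac12\mu^2 - \phi(x_M),
\]
which after expanding and cancelling the $\mu^2$ terms rearranges to
\[
\phi(x_M)\bigl(\phi(x_M) - 2(\mu-1)\bigr) \;\ge\; 0.
\]
Similarly, evaluating at $x_m$ and using $L\phi(x_m)\ge\phi(x_m)$ flips the inequality and yields
\[
\phi(x_m)\bigl(\phi(x_m) - 2(\mu-1)\bigr) \;\le\; 0.
\]

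Now it is just case analysis on the sign of $\mu-1$. When $\mu>1$, the second inequality forces one factor nonnegative and the other nonpositive, which together with $2(\mu-1)>0$ pins down $0\le\phi(x_m)\le 2(\mu-1)$; the first inequality forces $\phi(x_M)\ge 2(\mu-1)$ or $\phi(x_M)\le 0$, and since $\phi(x_M)\ge\phi(x_m)\ge 0$ only the former is consistent (the degenerate case $\phi\equiv 0$ being absorbed into the weak inequality). When $\mu\le 1$, a symmetric analysis with $2(\mu-1)\le 0$ gives $2(\mu-1)\le\phi(x_m)\le 0\le\phi(x_M)$, completing both cases.

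I do not anticipate any genuine obstacle here; the content of the lemma is essentially that the kernel is a probability density plus the shape of a quadratic. The only item worth being careful about is the direction of the inequality $L\phi(x_M)\le\phi(x_M)$ versus $L\phi(x_m)\ge\phi(x_m)$ and the sign of $2(\mu-1)$, both of which are straightforward but must be tracked to extract the correct endpoint of the quadratic's zero set.
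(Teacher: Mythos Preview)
Your proof is correct and follows essentially the same route as the paper's: both use the comparison $\phi(x_m)\le L\phi(x)\le\phi(x_M)$ (the paper phrases this as ``$L$ is monotone and $Lc=c$'', you phrase it as ``$K_P$ is a probability density''), plug into \eqref{eqv u eqn} at the extremal points, and read off the same quadratic inequalities. Your case analysis is more explicit than the paper's terse ``Combining both cases, we have the desired estimates''; one small caveat is that your parenthetical about $\phi\equiv 0$ being ``absorbed into the weak inequality'' is not quite right when $\mu>1$, since then $0<2(\mu-1)$ --- but this edge case is not handled in the paper's proof either and is irrelevant to the rest of the analysis, where $\mu<1$.
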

\begin{proof}
	By Lemma~\ref{Lemma:L:monotone}, $L$ is a strictly monotone operator.  Since furthermore $Lc = c$ for constants $c$, we therefore obtain that
	\[
	\frac{1}{2} \left( \mu - \phi \right)^2 = \frac{1}{2} \mu^2 - L \phi \le \frac{1}{2} \mu^2 - \phi(x_m).
	\]
	In particular, $\frac{1}{2} (\mu - \phi(x_m))^2 \le \frac{1}{2} \mu^2 - \phi(x_m)$, or equivalently, we have the inequality
	\[
	\phi(x_m)\left( \frac{1}{2} \phi(x_m) - (\mu - 1) \right) \le 0.
	\]
	Similar arguments can be made to obtain 
	\[
	\frac{1}{2} \left( \mu - \phi \right)^2 = \frac{1}{2} \mu^2 - L \phi \ge \frac{1}{2} \mu^2 - \phi(x_M),
	\]
	which yields that
	\[
	\phi(x_M)\left( \frac{1}{2} \phi(x_M) - (\mu - 1) \right) \ge 0.
	\]
	Combining both cases, we have the desired estimates.  
\end{proof}
\begin{remark} \label{Rmk:phi=0}
	If a solution satisfies $\phi(x)=0$ for some $x$, then at those $x$, the equation~\eqref{eqv u eqn} reduces to $L\phi=0$.  Since $L$ is a strictly monotone operator by Lemma~\ref{Lemma:L:monotone}, we must have either $\phi \equiv 0$, or $\phi$ is sign-changing.  
\end{remark}

We come to the first set of results in this section, which we need to rule out the closed loop possibility of the solution in the global bifurcation analysis.  It shows that any nontrivial, even, periodic $C^1$ solution, which is nonincreasing on $(0, \pi)$, must be strictly decreasing and less than $\mu$ on $(0, \pi)$.  Thus, we conclude that $\max\phi = \phi(0) = \mu$.
\begin{lemma} \label{Lem:phi'<0:phi<mu}
	Any nontrivial, $2\pi$-periodic, even solution $\phi \in C^1(\T)$ of \eqref{main u eqn} which is nonincreasing on $(0,\pi)$ satisfies
	\[
	\phi'(x) < 0 \qquad \text{and} \qquad  \phi(x) < \mu \qquad \text{on } (0,\pi).
	\]
	For such a solution, one necessarily has $\mu > 0$.  Moreover, if $\phi \in C^2(\T)$, then $\phi''(0) < 0$.  
\end{lemma}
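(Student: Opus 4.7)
The plan is to extract sign information for $\phi'$ and $\mu-\phi$ from the differentiated equation by appealing to the strict monotonicity of $L$ furnished by Lemma~\ref{Lemma:L:monotone}. Differentiating \eqref{main u eqn} and rearranging yields the key identity
\[
(\mu-\phi)\phi' = L\phi' \qquad \text{on } \T.
\]
Since $\phi$ is even and nonincreasing on $(0,\pi)$, the function $-\phi'$ is odd and nonnegative on $[0,\pi]$. Lemma~\ref{Lemma:L:monotone} applied to the pair $(-\phi',0)$ therefore leaves two alternatives: either $L\phi'(x)<0$ strictly on $(0,\pi)$, or $\phi'\equiv 0$ on $\T$. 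The second alternative would make $\phi$ a constant solution, which is ruled out by the nontriviality assumption in the present bifurcation setting. Plugging $L\phi'<0$ back into the identity rules out any zero of $\phi'$ in $(0,\pi)$, since at such a point the right-hand side is strictly negative and the left-hand side vanishes; together with $\phi'\le 0$ this forces $\phi'<0$ strictly on $(0,\pi)$, and the identity then immediately gives $\mu-\phi(x)>0$ on $(0,\pi)$.

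For the sign of $\mu$, the evenness of $\phi$ combined with strict decrease on $(0,\pi)$ places $\max\phi$ at $x=0$, so Lemma~\ref{Lem:maxPhi:minPhi:estimate} gives $\phi(0)\ge 0$ in either of its two regimes. By continuity, the bound $\phi<\mu$ on $(0,\pi)$ extends to $\phi(0)\le\mu$ at the endpoint, so $\mu\ge 0$. The boundary case $\mu=0$ forces $\phi(0)=0$, and Remark~\ref{Rmk:phi=0} then leaves only $\phi\equiv 0$ (ruled out by nontriviality) or $\phi$ sign-changing (ruled out by $\phi<\mu=0$ on $(0,\pi)$). Hence $\mu>0$.

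For $\phi''(0)<0$ in the $C^2(\T)$ case, I would differentiate \eqref{main u eqn} once more to obtain $-(\phi')^2 + (\mu-\phi)\phi'' = L\phi''$ and evaluate at $x=0$, where $\phi'(0)=0$ by evenness, to reach
\[
(\mu-\phi(0))\phi''(0) = L\phi''(0).
\]
The delicate point is showing $L\phi''(0)<0$ strictly. I would use evenness of $K_P$ to rewrite the convolution and then integrate by parts, dropping the boundary terms by $2\pi$-periodicity together with $\phi'(0)=\phi'(\pi)=0$:
\[
L\phi''(0) = \int_{-\pi}^\pi K_P(y)\phi''(y)\dm y = -2\int_0^\pi K_P'(y)\phi'(y)\dm y.
\]
By Theorem~\ref{Thm:K-Properties}(c), $K_P$ is strictly decreasing and smooth on $(0,\pi)$, so $K_P'\le 0$ there with $\{K_P'<0\}$ open and dense (otherwise $K_P$ would be constant on some subinterval). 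Combined with $\phi'<0$ on $(0,\pi)$ from the first part, the integrand is nonnegative and strictly positive on a set of positive measure, giving $L\phi''(0)<0$. Reading this back into the displayed identity first upgrades $\phi(0)\le\mu$ to $\phi(0)<\mu$ (otherwise $0=L\phi''(0)<0$), and then yields $\phi''(0)=L\phi''(0)/(\mu-\phi(0))<0$.

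The step I expect to be the main obstacle is this last upgrade from the soft bound $\phi''(0)\le 0$---which follows merely from $\phi'(0)=0$ being a right-endpoint maximum of $\phi'$---to the strict one. It requires combining two independent strict-positivity facts (strict decrease of $K_P$ on $(0,\pi)$ and strict negativity of $\phi'$ there) and simultaneously bootstrapping the a priori estimate $\phi(0)\le\mu$ to $\phi(0)<\mu$ using the same doubly-differentiated identity.
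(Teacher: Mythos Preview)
Your proof is correct and follows essentially the same route as the paper's: differentiate \eqref{main u eqn}, apply Lemma~\ref{Lemma:L:monotone} to the odd function $\phi'$ to force $L\phi'<0$ and hence $\phi'<0$, $\phi<\mu$ on $(0,\pi)$; then differentiate again and integrate by parts against $K_P$ to get $(\mu-\phi(0))\phi''(0)=-2\int_0^\pi K_P'(y)\phi'(y)\dm y<0$. The only cosmetic difference is that the paper carries out the integration by parts via an $\epsilon$-limit near $y=0$ (a habit inherited from the Whitham setting where $K_P$ is unbounded at the origin), whereas you integrate by parts directly---which is justified here because $K_P$ is continuous and lies in $W^{1,1}(\T)$ for $\alpha>1$; you are also slightly more explicit than the paper in noting that the identity itself rules out $\phi(0)=\mu$ before dividing.
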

\begin{proof}
	Since $\phi \in C^1(\T)$, we can take the derivative of equation \eqref{eqv u eqn} to obain
	\[
	(\mu - \phi)\phi'(x) = L \phi'(x).
	\]
	Since $\phi$ is nonincreasing on $(0,\pi)$, we have $\phi'(x) \le 0$ on $(0,\pi)$.  We want to show that $L \phi'(x) < 0$ for $x \in (0,\pi)$.  Since $\phi'$ is odd, nontrivial, and nonpositive on $(0,\pi)$, Lemma~\ref{Lemma:L:monotone} implies $L \phi'(x) < 0$ on $(0,\pi)$.  Thus, we have 
	\[
	(\mu - \phi)\phi'(x) < 0, 
	\]
	and hence we have shown that $\phi'(x) < 0$ and $\phi(x) < \mu$ on $(0,\pi)$.  On the other hand, by Lemma~\ref{Lem:maxPhi:minPhi:estimate}, we know $0 \le \phi(0) = \phi(x_M)$, so that $\mu > 0$.  
	
	To prove the second part, assume that $\phi \in C^2(\T)$.  Differentiating twice equation~\eqref{eqv u eqn} gives
	\[
	(\mu - \phi)\phi'' = L \phi'' + (\phi')^2.
	\]
	Then evaluating this equality by $x = 0$ and using the evenness of $K_P$ and $\phi''$, we compute
	\begin{align*}
	(\mu - \phi)\phi''(0) &= \int_{-\pi}^\pi K_P(y) \phi''(y) \dm y \\
	&= 2 \int_0^\pi K_P(y) \phi''(y) \dm y \\
	&= 2 \int_0^\epsilon K_P(y) \phi''(y) \dm y + 2 \int_\epsilon^\pi K_P(y) \phi''(y) \dm y \\
	&= 2 \int_0^\epsilon K_P(y) \phi''(y) \dm y + 2 [K_P(y) \phi'(y)]|_{y = \epsilon}^{y = \pi} - 2 \int_\epsilon^\pi K_P'(y) \phi'(y) \dm y.
	\end{align*}
	Since $\phi$ is in $C^2$ and $K$ is integrable, the first integral vanishes as $\epsilon \to 0$, so does the boundary term $K_P(\epsilon) \phi'(\epsilon)$.  The term $K_P(\pi) \phi'(\pi)$ also vanishes since $\phi'(\pi) = 0$.  By Theorem~\ref{Thm:K-Properties} and what we just proved, both $K_P'$ and $\phi'$ are strictly negative on $(0,\pi)$, and hence we have
	\[
	(\mu - \phi) \phi''(0) = -2 \lim_{\epsilon \to 0^+} \int_\epsilon^\pi K_P'(y) \phi'(y) \dm y < 0,
	\]
	which shows that $\phi''(0) < 0$.	
\end{proof}
In both papers \cite{Bruell_Dhara2018,Ehrnstrom_Wahlen2019}, a solution to the nonlocal equation is smooth when it is below its maximum value.  In the next theorem, we show that the same conclusion is reached when the solution approaches the maximum value $\phi(0) = \mu$ from below.  Here we only rely on the boundedness of the solution.  
\begin{thm} \label{Thm:phi:smooth}
	Let $\phi \le \mu$ be a bounded solution of \eqref{eqv u eqn}.  Then:
	\begin{enumerate}[label=(\roman*)]
		\item If $\phi < \mu$ uniformly on $\T$, then $\phi \in C^\infty(\R)$.
		\item $\phi$ is smooth on any open set where $\phi < \mu$.  
	\end{enumerate}
\end{thm}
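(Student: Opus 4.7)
The plan is to rewrite equation~\eqref{eqv u eqn} in the form
\[
\phi = \mu - \sqrt{\mu^2 - 2L\phi},
\]
valid because $\phi \le \mu$, and then bootstrap through this identity using the smoothing estimate $L : \mathcal{C}^s(\T) \to \mathcal{C}^{s+\alpha}(\T)$ from Section~\ref{Section:Setting:K*acts:period}, under the positivity $\mu^2 - 2L\phi = (\mu - \phi)^2$ of the radicand.

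For (i), the uniform bound $\mu - \phi \ge \delta > 0$ gives $\mu^2 - 2L\phi \ge \delta^2$. Starting from $\phi \in L^\infty(\T) \hookrightarrow \mathcal{C}^0(\T)$, I would iterate: $\phi \in \mathcal{C}^s$ implies $L\phi \in \mathcal{C}^{s+\alpha}$; since $t \mapsto \sqrt{t}$ is $C^\infty$ on $[\delta^2, \infty)$ and composition with a smooth function preserves Zygmund regularity (a standard Moser estimate), $\sqrt{\mu^2 - 2L\phi} \in \mathcal{C}^{s+\alpha}$, and hence $\phi \in \mathcal{C}^{s+\alpha}$. After $k$ iterations $\phi \in \mathcal{C}^{k\alpha}(\T)$ for every $k \in \N$, so $\phi \in C^\infty(\R)$.

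For (ii), fix $x_0 \in U$. Continuity of $L\phi \in \mathcal{C}^\alpha(\T)$ together with $\mu - \phi = \sqrt{\mu^2 - 2L\phi}$ forces $\phi$ to be continuous, so there are $R > 0$ and $\delta > 0$ with $I_0 := (x_0 - R, x_0 + R) \subset U$ and $\mu - \phi \ge \delta$ on $\overline{I_0}$. I would run the same bootstrap on a nested sequence of intervals $I_k := (x_0 - r_k, x_0 + r_k)$ with $r_k \searrow r$ for some $r \in (0, R)$, using smooth cutoffs $\chi_k$ satisfying $\chi_k \equiv 1$ on an intermediate interval $J_k$ with $I_{k+1} \subset J_k \subset I_k$ and $\supp \chi_k \subset I_k$, via the decomposition
\[
L\phi = L(\chi_k \phi) + L((1-\chi_k)\phi).
\]
The first summand gains $\alpha$ orders of Zygmund regularity over $\chi_k \phi$ from the global smoothing property of $L$. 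For the second, Theorem~\ref{Thm:K-Properties} gives $K_P \in C^\infty(\T \setminus \{0\})$; since the distance between $x \in I_{k+1}$ and $y \in \supp(1 - \chi_k) \subset \T \setminus J_k$ is bounded below on $\T$, differentiation under the integral yields $L((1-\chi_k)\phi) \in C^\infty(I_{k+1})$. The square-root identity on $I_{k+1}$ then upgrades $\phi$ from $\mathcal{C}^{k\alpha}(I_k)$ to $\mathcal{C}^{(k+1)\alpha}(I_{k+1})$, and letting $k \to \infty$ gives $\phi \in C^\infty$ on $(x_0 - r, x_0 + r)$. Since $x_0 \in U$ was arbitrary, $\phi \in C^\infty(U)$.

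The hard part is the nonlocality of $L$ in (ii): local improvement of $\phi$ does not a priori upgrade the local regularity of $L\phi$. The cutoff decomposition above, combined with the smoothness of $K_P$ away from the origin (Theorem~\ref{Thm:K-Properties}), is the device that makes the local bootstrap work; once that is in hand, the composition with $\sqrt{\cdot}$ is handled by a routine Moser estimate.
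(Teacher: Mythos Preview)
Your proof is correct and follows the paper's route: rewrite \eqref{eqv u eqn} as $\phi = \mu - \sqrt{\mu^2 - 2L\phi}$ and bootstrap via the smoothing $L : \mathcal{C}^s \to \mathcal{C}^{s+\alpha}$ together with a composition estimate for $\sqrt{\cdot}$ (the paper invokes \cite[Theorem~2.87]{Bahouri_Chemin_Danchin2011} for the Nemytskii operator). For part~(ii) both arguments localize through the cutoff decomposition $L\phi = L(\chi\phi) + L((1-\chi)\phi)$; your handling of the second summand---smooth on the inner set because $K_P \in C^\infty(\T\setminus\{0\})$ and the supports are separated (Theorem~\ref{Thm:K-Properties})---is in fact more explicit than the paper's, which simply asserts that this term ``vanishes by construction.''
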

\begin{proof}
	To show part $(i)$, let $\phi < \mu$ uniformly on $\T$ be a bounded solution to equation~\eqref{eqv u eqn}.  We know from Section~\ref{Section:Setting:K*acts:period} that $L$ maps $\mathcal{C}^s(\T)$ into $\mathcal{C}^{s + \alpha}(\T)$ for any $s \in \R$.  In particular, $L$ maps $L^\infty(\T) \subset \mathcal{C}^0(\T)$ into $\mathcal{C}^\alpha(\T)$.  Moreover, if $s > 0$, the Nemytskii operator
	\[
	f \mapsto \mu - \sqrt{\frac{1}{2} \mu^2 - f}
	\]
	maps $L^\infty(\T) \cap B^s_{\infty,\infty}(\T)$ into itself for $f < \frac{1}{2} \mu^2$ and $s > 0$; see \cite[Theorem 2.87]{Bahouri_Chemin_Danchin2011}.  Since $\phi < \mu$, equation \eqref{eqv u eqn} gives
	\[
	L \phi < \frac{1}{2} \mu^2,
	\]
	and hence, we obtain the mapping
	\begin{equation} \label{Lphi:compose:map}
	\left[ L \phi \mapsto \sqrt{\frac{1}{2} \mu^2 - L \phi} \right] \circ \left[ \phi \mapsto L \phi \right] : L^\infty(\T) \cap B_{\infty,\infty}^s(\T) \to B_{\infty,\infty}^{s + \alpha}(\T)
	\end{equation}
	for all $s \ge 0$.  Finally, equation \eqref{eqv u eqn} yields
	\[
	\phi = \mu - \sqrt{\mu^2 - 2 L \phi}
	\]
	by the assumption $\phi < \mu$.  We also note that the mapping $x \mapsto \sqrt{x}$ is real analytic for $x > 0$, and hence an iterative argument in $s$ shows that $\phi \in C^\infty(\T)$.  To show that $\phi \in C^\infty(\R)$, we note that if $\phi$ is a periodic solution of \eqref{eqv u eqn}, its translation $\phi_h := \phi(\cdot + h)$ for any $h \in \R$ is also periodic.  However, the previous analysis implies that $\phi_h \in C^\infty(\T)$ for any $h \in \R$, so we conclude that $\phi \in C^\infty(\R)$.
	
	To show part $(ii)$, let $\phi \in L^\infty(\R)$, and assume that $\phi \in \mathcal{C}_\mathrm{loc}^s(U)$ for some open set $U$ and any $s \in \R$.  Let $\varphi \in C^\infty_0(U)$ be a smooth function compactly supported in $U$, and $\widetilde{\varphi} \in C^\infty_0(U)$ be a cut-off function with $\widetilde{\varphi} = 1$ in a neighborhood $V \Subset U$ of $\supp \varphi$.  Then
	\[
	\varphi L\phi = \varphi L(\widetilde{\varphi} \phi) + \varphi L((1 - \widetilde{\varphi})\phi).
	\]
	It is easy to see that $\varphi L(\widetilde{\varphi}\phi) \in \mathcal{C}_\mathrm{loc}^{s + \alpha}(U)$ and $\varphi L((1 - \widetilde{\varphi})\phi)$ vanishes by construction.  Thus, $L\phi \in \mathcal{C}_\mathrm{loc}^{s + \alpha}(U)$.  Using the same iteration argument in $s$ as in the proof of part $(i)$ shows that $\phi$ is smooth on $U$.  
\end{proof}

As a motivation, we shall see that when the order of the dispersive term is strictly smaller than $-1$, any decrease of the order does not affect the regularity of the wave solution.  
\begin{thm} \label{Thm:cannot-C1}
	Let $\phi \le \mu$ be an even solution of \eqref{eqv u eqn}, which is nonincreasing on $[0,\pi]$, and attains its maximum at $\phi(0) = \mu$.  Then $\phi$ cannot belong to the class $C^1(\T)$.  
\end{thm}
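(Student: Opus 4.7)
The plan is to assume $\phi \in C^1(\T)$ and derive a contradiction by differentiating \eqref{eqv u eqn} and comparing the behavior of both sides at $x=0$. Since $\phi$ is even and $C^1$, $\phi'$ is odd, so $\phi'(0)=0$; monotonicity on $[0,\pi]$ gives $\phi'\le 0$ there. The constant case $\phi\equiv\mu$ can be discarded: plugging in forces $\mu\in\{0,2\}$ and a trivial constant solution. Thus $\phi'$ is a continuous odd function, nonpositive on $[0,\pi]$, and not identically zero. Pointwise differentiation of \eqref{eqv u eqn} gives
\[
(\mu - \phi(x))\phi'(x) = L\phi'(x) \qquad \text{on } \T.
\]
Setting $F := L\phi'$, the strategy is to compute $F'(0)$ from each side and show the two values are incompatible.

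For the right-hand side, $F(0)=0$ and
\[
\frac{F(x)}{x} = \frac{\mu - \phi(x)}{x}\,\phi'(x).
\]
The first factor is the difference quotient of $-\phi$ at $0$, tending to $-\phi'(0)=0$, while the second tends to $\phi'(0)=0$ by continuity. Hence $F'(0)=0$.

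For the left-hand side, I use $K_P \in W^{1,1}(\T)$ from Theorem~\ref{Thm:K-Properties}(b). Since $K_P$ is absolutely continuous with $K_P' \in L^1(\T)$ and $\phi' \in L^\infty(\T)$, Fubini and the fundamental theorem of calculus yield
\[
L\phi'(x+h) - L\phi'(x) = \int_0^h (K_P' * \phi')(x+s)\,ds,
\]
and $K_P' * \phi'$ is continuous by $L^1$-continuity of translations. Therefore $F \in C^1(\T)$ with $F' = K_P' * \phi'$. Evaluating at $x=0$ and using that $K_P'$ is odd (from evenness of $K_P$) and $\phi'$ is odd,
\[
F'(0) = \int_{-\pi}^{\pi} K_P'(-y)\,\phi'(y)\,dy = -2\int_0^\pi K_P'(y)\,\phi'(y)\,dy.
\]
Theorem~\ref{Thm:K-Properties}(c) gives $K_P' < 0$ on $(0,\pi)$, while $\phi'$ is continuous, $\le 0$, and nontrivial, hence strictly negative on an open subinterval. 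On that subinterval the integrand $K_P'\phi'$ is strictly positive, so $F'(0) < 0$, contradicting $F'(0)=0$.

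The main obstacle is the pointwise differentiation of the convolution $L\phi'$ at a single point together with the explicit integral formula for the derivative; since $\phi'$ is assumed only continuous, this cannot be done by differentiating under the integral sign against $\phi''$. The workaround is to place the regularity entirely on the kernel, exploiting $K_P \in W^{1,1}(\T)$ so that $K_P * \phi'$ is automatically $C^1$ whenever $\phi' \in L^\infty$. A secondary subtlety is that the conclusion $F'(0)<0$ requires strict (rather than weak) monotonicity of $K_P$ on $(0,\pi)$, the qualitative content of Theorem~\ref{Thm:K-Properties}(c).
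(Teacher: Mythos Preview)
Your argument is correct and follows essentially the same approach as the paper: both proofs hinge on showing that $(L\phi')'(0)=K_P'\ast\phi'(0)=-2\int_0^\pi K_P'(y)\phi'(y)\,dy$ is strictly negative by the sign properties of $K_P'$ and $\phi'$, while the product structure $(\mu-\phi)\phi'$ forces this quantity to vanish since $\phi'(0)=0$. The paper packages the contradiction via a second-order Taylor expansion of $(\mu-\phi)^2=\mu^2-2L\phi$, obtaining $(\mu-\phi(x))^2\gtrsim x^2$ near $0$, whereas you compute $F'(0)$ directly in two ways; your justification of $(K_P\ast\phi')'=K_P'\ast\phi'$ via $K_P\in W^{1,1}(\T)$ is in fact more carefully stated than the corresponding step in the paper.
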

\begin{proof}
	By contradiction, suppose that $\phi$ is in $C^1(\T)$.  Since $K'_P$ is integrable by Theorem~\ref{Thm:K-Properties}, equation~\eqref{eqv u eqn} gives
	\[
	\norm[\infty]{\frac{\dm}{\dx} (\mu - \phi)^2} \le 2 \norm[1]{K'} \norm[\infty]{\phi}.
	\]
	Thus, $(\mu - \phi)^2$ is twice continuously differentiable.  Then fixing $|x| \ll 1$ and applying the Taylor theorem, we have
	\begin{equation} \label{tmp:Taylor:(mu-phi)^2}
	(\mu - \phi)^2(x) = (\mu - \phi)^2(0) + [(\mu - \phi)^2]'(0)x + \frac{1}{2} [(\mu - \phi)^2]''(\xi) x^2
	\end{equation}
	for some $\xi$ between $0$ and $x$.  Differentiating equation~\eqref{eqv u eqn} twice gives
	\[
	\frac{1}{2}[(\mu - \phi)^2]''(\xi) = -K' * \phi'(\xi),
	\]
	which substitutes into the expression \eqref{tmp:Taylor:(mu-phi)^2} yields
	\[
	(\mu - \phi)^2(x) = -K' * \phi'(\xi) x^2.
	\]
	On the other hand, since $K_P$ is even, nonincreasing on $(0,\pi)$ by Theorem~\ref{Thm:K-Properties}, and $\phi$ is nonincreasing on $(0, \pi)$, we obtain
	\[
	-K' * \phi'(0) = -\int_{-\pi}^\pi K_P'(-y) \phi'(y) \dm y = 2 \int_0^\pi K_P'(y) \phi'(y) \dm y = C > 0
	\]
	for some constant $C > 0$.  Both $K$ and $\phi$ are continuous, so there exists a constant $C_0 > 0$ such that 
	\[
	-K' * \phi'(\xi) \ge C_0
	\]
	for any $\xi$ between $0$ and $x$.  Using this bound and equation \eqref{tmp:Taylor:(mu-phi)^2}, we have the estimate
	\[
	(\mu - \phi)^2(x) \ge C_0 x^2,
	\]
	which is equivalent to
	\[
	\frac{\mu - \phi(x)}{|x|} \gtrsim 1.
	\]
	for all $|x| \ll 1$.  Taking the limit when $x \to 0$ leads to contradiction to the fact $\phi'(0)=0$.
\end{proof}

In the next theorem, we investigate the regularity of a solution when it touches $\mu$ from below.  It was shown that the solution has $\frac{1}{2}$-H\"older regularity at $x=0$ for an inhomogeneous multiplier of order greater than $-1$ \cite{Ehrnstrom_Wahlen2019}, and Lipschitz continuous at $x=0$ for a homogeneous multiplier of order less than $-1$ \cite{Bruell_Dhara2018}.  For our case with inhomogeneous symbol of order less than $-1$, we, indeed, obtain a Lipschitz regularity at $x=0$.
\begin{thm} \label{Thm:Soln:Phi:regularity:at:0}
	Let $\phi \le \mu$ be a nontrivial, $2\pi$-periodic, even solution of \eqref{eqv u eqn}, which is nonincreasing on $[0,\pi]$.  If $\phi$ attains its maximum at $\phi(0)=\mu$, then the following holds:
	\begin{enumerate}[label=(\roman*)]
		\item $\phi \in C^\infty(\T \backslash \{0\})$ and $\phi$ is strictly decreasing on $(0,\pi)$.
		\item $\phi \in C^{1-}(\T)$, that is $\phi$ is Lipschitz continuous.
		\item $\phi$ is precisely Lipschitz continuous at $x=0$, that is, there exist constants $c_1, c_2 > 0$ such that
		\[
		c_1 |x| \le \mu - \phi(x) \le c_2 |x|
		\]
		for $|x| \ll 1$.
	\end{enumerate}
\end{thm}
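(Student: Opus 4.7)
The argument naturally divides into three stages, which I would take in the logical order (ii) $\to$ (i) $\to$ (iii), since the strict monotonicity and the second-derivative computation both become cleaner once Lipschitz regularity is available.

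\emph{Stage 1, part (ii).} I would rewrite equation~\eqref{eqv u eqn} as $\phi = \mu - \sqrt{\mu^2 - 2L\phi}$ and bootstrap in the Zygmund scale. Since $L:\mathcal{C}^s(\T)\to\mathcal{C}^{s+\alpha}(\T)$ boundedly, starting from $\phi\in\mathcal{C}^{\beta_0}(\T)$ with $\beta_0=0$ one has $(\mu-\phi)^2=\mu^2-2L\phi\in\mathcal{C}^{\alpha+\beta_n}$. While $\alpha+\beta_n\le 2$, the Glaeser/square-root inequality for nonnegative $\mathcal{C}^s$ functions with an isolated zero gives $\mu-\phi\in\mathcal{C}^{(\alpha+\beta_n)/2}$, so $\beta_{n+1}=(\alpha+\beta_n)/2$. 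This iteration contracts to $\alpha>1$, hence after finitely many steps $\alpha+\beta_n>2$; at that point $L\phi$ belongs to a Zygmund space of order strictly greater than $2$, therefore to $C^2(\T)$, so $(\mu-\phi)^2\in C^2(\T)$, and the classical Glaeser inequality for nonnegative $C^2$ functions yields $\mu-\phi\in C^{1-}(\T)$.

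\emph{Stage 2, part (i).} With $\phi\in C^{1-}(\T)$, $\phi'$ is odd, bounded, nonpositive on $[0,\pi]$, and nontrivial. Differentiating \eqref{eqv u eqn} gives $(\mu-\phi)\phi'=L\phi'$ a.e.\ on $\T$. Applying Lemma~\ref{Lemma:L:monotone} to the pair $0\ge\phi'$ yields $L\phi'<0$ on $(0,\pi)$, so $(\mu-\phi)\phi'<0$ a.e.\ there and hence $\mu-\phi>0$ a.e.\ on $(0,\pi)$; a flat set $\phi\equiv\mu$ on $(0,b)$ would force $\phi'\equiv 0$ and thus $L\phi'\equiv 0$ there, contradicting the strict sign just proved. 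Consequently $\phi<\mu$ on $(0,\pi]$, Theorem~\ref{Thm:phi:smooth}(ii) gives $\phi\in C^\infty((0,\pi))$, extended to $\T\setminus\{0\}$ by evenness, and the a.e.\ inequality $\phi'<0$ is upgraded to a pointwise one on $(0,\pi)$ by smoothness.

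\emph{Stage 3, part (iii).} One more application of the smoothing property gives $L\phi\in\mathcal{C}^{1+\alpha}\subset C^2(\T)$, hence $(\mu-\phi)^2\in C^2(\T)$. Since $K_P'\in L^1(\T)$ by Theorem~\ref{Thm:K-Properties}(b) and $\phi'\in L^\infty(\T)$, we have $(L\phi)''=K_P'*\phi'$ pointwise, and folding using oddness of $K_P'$ and $\phi'$ yields
\[
[(\mu-\phi)^2]''(0) = -2(K_P'*\phi')(0) = 4\int_0^\pi K_P'(y)\,\phi'(y)\dm y.
\]
By Theorem~\ref{Thm:K-Properties}(c) and Stage 2 both factors in the integrand are strictly negative on $(0,\pi)$, so $[(\mu-\phi)^2]''(0)>0$. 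A second-order Taylor expansion at $0$ (value and first derivative vanish by construction and by evenness) then gives $(\mu-\phi(x))^2=\tfrac{1}{2}[(\mu-\phi)^2]''(0)\,x^2+o(x^2)$, which furnishes the matching constants $c_1,c_2$ in the precise Lipschitz bound.

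The delicate step is Stage 1: one must carry the Glaeser-type inequality through the $\mathcal{C}^s$ scale and transition cleanly at the threshold $\alpha+\beta=2$ from its Hölder-valued form to its $C^2$-Lipschitz form, verifying that the square-root estimate is available at each iterate without presupposing regularity not yet established. The remaining stages parallel computations already appearing in Lemma~\ref{Lem:phi'<0:phi<mu} and Theorem~\ref{Thm:cannot-C1}, together with sign analysis based on Theorem~\ref{Thm:K-Properties}.
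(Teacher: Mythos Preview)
Your argument is correct, but it is organized quite differently from the paper's. The paper proves (i) \emph{first}, using only continuity of $\phi$: for $x\in(0,\pi)$ and $h\in(-\pi,0)$ it writes
\[
\tfrac{1}{2}\bigl(2\mu-\phi(x+h)-\phi(x-h)\bigr)\bigl(\phi(x+h)-\phi(x-h)\bigr)
=\int_0^\pi\!\bigl(K_P(x+y)-K_P(x-y)\bigr)\bigl(\phi(y-h)-\phi(y+h)\bigr)\,\mathrm{d}y
\]
and reads off strict monotonicity directly from the sign of $K_P(x+y)-K_P(x-y)$, with no derivative needed. With $\phi<\mu$ on $(0,\pi]$ and smoothness away from $0$ in hand, the paper's bootstrap for (ii) starts from the elementary bound $\tfrac{1}{2}(\phi(x)-\phi(y))^2\le|L\phi(x)-L\phi(y)|$ to get $C^{1/2}$, then differentiates the equation on the smooth region and iterates $(\mu-\phi)\phi'(x)=(K*\phi)'(x)-(K*\phi)'(0)\lesssim|x|^\beta$ until $\beta=1$. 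For (iii) the paper divides that identity by $\mu-\phi(\xi)\lesssim|\xi|$ and passes to the limit $\xi\to0^+$, obtaining $\liminf_{\xi\to0^+}|\phi'(\xi)|\ge 2\int_0^\pi K_P'\phi'>0$, whence the lower Lipschitz bound via the mean value theorem.

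Your route---Glaeser iteration for (ii), then differentiation plus Lemma~\ref{Lemma:L:monotone} for (i), then a second-order Taylor expansion of $(\mu-\phi)^2$ at the origin for (iii)---buys a cleaner, more packaged treatment of the regularity step, at the cost of importing the square-root lemma across the H\"older/Zygmund scale (and note that the ``isolated zero'' hypothesis you mention is unnecessary: the inequality $|\sqrt{f(x)}-\sqrt{f(y)}|\le\sqrt{|f(x)-f(y)|}$ and its $C^{1,\gamma}$/$C^2$ analogues hold for any nonnegative $f$, which is good since you do not yet know the zero is isolated when you invoke it). The paper's finite-difference proof of (i) is more elementary and logically independent of (ii), and its bootstrap for (ii) avoids citing Glaeser altogether; on the other hand your Taylor argument for (iii) and the paper's $\liminf$ argument are really the same computation, since both reduce to the positivity of $\int_0^\pi K_P'\phi'$.
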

\begin{proof}
	Assume that $\phi \le \mu$ is an even solution of equation \eqref{eqv u eqn}, which is nonincreasing on $(0, \pi)$ and attains its maximum at $\phi(0) = \mu$.  
	\begin{enumerate}[label=$(\roman*)$]
	\item Let $x \in (0, \pi)$ and $h \in (-\pi,0)$.  Then we rewrite equation~\eqref{eqv u eqn} as
	\begin{align*}
	\frac{1}{2} (2\mu - \phi(x+h) - \phi(x-h)) & (\phi(x+h) - \phi(x-h)) \\
	&= K*\phi(x+h) - K*\phi(x-h)
	\end{align*}
	We want to show that the right-hand side is strictly negative.  In fact, for any $x \in (0, \pi)$ and $h \in (-\pi, 0)$, since $\phi$ and $K_P$ are even and periodic, we obtain
	\begin{align*}
	K*\phi(x+&h) - K*\phi(x-h) \\
	&= \int_0^\pi (K_P(x + y) - K_P(x - y)) (\phi(y-h) - \phi(y+h)) \dm y.
	\end{align*}
	By similar arguments as in the proof of Lemma~\ref{Lemma:L:monotone}, we know that $K_P(x + y) - K_P(x - y) < 0$ for $x, y \in (0,\pi)$,  We further have $\phi(y - h) - \phi(y + h) \le 0$ for $y \in (0,\pi)$ and $h \in (-\pi, 0)$ by the assumption that $\phi$ is even and nonincreasing on $(0,\pi)$.  Therefore, the integrand is nonpositive.  Since $\phi$ is a nontrivial solution and $K_P$ is not a constant, we conclude that
	\begin{equation} \label{K*phi(y+h)-K*phi(y-h)<0}
	K*\phi(x+h) - K*\phi(x-h) > 0
	\end{equation}
	for any $h \in (-\pi,0)$.  Hence, $K*\phi(x + h) = K*\phi(x - h)$ if and only if $\phi(x + h) = \phi(x - h)$.  Then inequality \eqref{K*phi(y+h)-K*phi(y-h)<0} implies
	\[
	\phi(x+h) > \phi(x-h) \qquad \text{for any} \quad h \in (-\pi,0).
	\]
	Thus, $\phi$ is strictly decreasing on $(0,\pi)$.  In view of Therem~\ref{Thm:phi:smooth}, $\phi$ is smooth on $\T \backslash \{0\}$.
	
	\item To prove the Lipschitz regularity at the maximum point, we make use of a bootstrap argument.  By contradiction, assume that the solution $\phi < \mu$ is not Lipschitz continuous.  Suppose that $\phi$ is only a bounded function.  Then recalling that $L$ maps $L^\infty \subset \mathcal{C}^0(\T)$ into $\mathcal{C}^\alpha(\T)$ for $\alpha > 1$ and the expression
	\[
	\frac{1}{2} (2\mu - \phi(x) - \phi(y)) (\phi(x) - \phi(y)) = L \phi(x) - L \phi(y),
	\]
	we have the estimate
	\[
	\frac{1}{2} (\phi(x) - \phi(y))^2 \le |L \phi(x) - L \phi(y) | \simeq |x-y|.
	\]
	Thus, it is straightforward to see that $\phi$ is $\frac{1}{2}$-H\"older continuous.  
	
	Next, evaluating equation \eqref{eqv u eqn} at $x = 0$ gives
	\[
		-\frac{1}{2} \mu^2 + K*\phi(0) = 0,
	\]
	and then subtracting from equation \eqref{eqv u eqn}, we obtain
	\[
	\frac{1}{2} (\mu - \phi)^2(x) = K*\phi(0) - K*\phi(x)
	\]
	for $x \in (0, \pi)$.  Since $\phi$ is smooth on $\T \backslash \{0\}$ by Theorem~\ref{Thm:phi:smooth}, differentiating the above equality yields
	\begin{equation*} \label{tmp:(mu-phi)phi':equality}
	(\mu - \phi) \phi'(x) = (K*\phi)'(x) - (K*\phi)'(0),
	\end{equation*}
	where we are using the fact that $(K*\phi)'(0) = 0$ and $(K*\phi(0))' = 0$.  If $\phi$ is $\frac{1}{2}$-H\"older continuous, then $K*\phi \in \mathcal{C}^{\frac{1}{2} + \alpha}(\T)$.  Since $\alpha > 1$, we gain at least some H\"older regularity for $(K*\phi)'$.  Thus, the right-hand side of expression~\eqref{tmp:(mu-phi)phi':equality} can be estimated by a constant multiple of $|x|^\beta$ for some $\beta \in \left( \frac{1}{2}, 1 \right]$, and hence,
	\begin{equation} \label{mu-phi:phi'<|x|^beta}
	(\mu - \phi) \phi'(x) \lesssim |x|^\beta.
	\end{equation}
	By assumption that $\phi$ is not Lipschitz continuous at $x=0$, the above estimate guarantees that $\phi$ is at least $\beta$-H\"older continuous, where $\beta > \frac{1}{2}$.  We aim to bootstrap this argument to obtain Lipschitz regularity of $\phi$ at $x = 0$.  If $\frac{1}{2} + \alpha > 2$, we use that $K*\phi \in \mathcal{C}^{\frac{1}{2} + \alpha}(\T) \subset C^2(\T)$, which guarantees that its derivative is at least Lipschitz continuous ($\beta = 1$ in expression \eqref{mu-phi:phi'<|x|^beta}), and hence, $\phi$ is Lipschitz continuous.  On the other hand, if $\frac{1}{2} + \alpha \le 2$, then $\phi$ is $\gamma$-H\"older continuous for some $\gamma > \beta$.  We then repeat the argument finitely many times to yield that $\phi$ is indeed Lipschitz continuous at $x=0$, that is
	\begin{equation} \label{upper-bound:mu-phi}
	\mu - \phi(x) \lesssim |x|, \qquad \mathrm{for} \quad |x| \ll 1.
	\end{equation}

	\item From the upper bound \eqref{upper-bound:mu-phi}, it remains to show that
	\begin{equation} \label{tmp:upper-bound:mu-phi}
	(\mu - \phi)\phi'(x) \gtrsim |x|
	\end{equation}
	for $|x| \ll 1$.  Taking the derivative of equation~\eqref{eqv u eqn} and evaluating at any $\xi \in (0,\pi)$ we have
	\[
	(\mu - \phi) \phi'(\xi) = (K*\phi)'(\xi) = \int_0^\pi (K_P(\xi - y) - K_P(\xi + y)) \phi'(y) \dm y.
	\]
	On the other hand, evaluating the upper bound established in \eqref{upper-bound:mu-phi} at $x = \xi \in (0, \pi)$ gives $\mu - \phi(\xi) \lesssim |\xi|$.  Therefore, dividing the above equation by $(\mu - \phi)(\xi) > 0$ to obtain the inequality
	\begin{equation} \label{tmp:phi':ineq}
	\phi'(\xi) \gtrsim \int_0^\pi \frac{K_P(\xi - y) - K_P(\xi + y)}{|\xi|} \phi'(y) \dm y.
	\end{equation}
	Our aim is to show that $\liminf_{\xi \to 0^+} \phi'(\xi)$ is strictly bounded away from $0$.  We compute
	\begin{align*}
	&\lim_{\xi \to 0^+} \frac{K_P(\xi - y) - K_P(\xi + y)}{|\xi|} \\
	&= \lim_{\xi \to 0^+} \left( \frac{K_P(y - \xi) - K_P(y)}{\xi} + \frac{K_P(y) - K_P(y + \xi)}{\xi} \right) \frac{\xi}{|\xi|} = 2 K_P'(y)
	\end{align*}
	for any $y \in (0,\pi)$.  Then since $K'_P$ is integrable, taking the limit on both sides of the inequality~\eqref{tmp:phi':ineq} yields
	\begin{equation} \label{liminf:gtrsim:c}
	\liminf_{\xi \to 0} \phi'(\xi) \gtrsim 2 \int_0^\pi K_P'(y) \phi'(y) \dm y = c
	\end{equation}
	for some constant $c > 0$, since $\phi$ and $K_P$ are strictly decreasing on $(0,\pi)$.  Then for any $0 < x \ll 1$, applying the Mean Value Theorem, we have
	\[
	\frac{\phi(0) - \phi(x)}{x} = \phi'(z)
	\]
	for some $z \in (0, x)$.  Combing with the expression \eqref{liminf:gtrsim:c} gives our desired estimate~\eqref{tmp:upper-bound:mu-phi}, and hence, we conclude that $\phi$ is exactly Lipschitz continuous at $x = 0$.  \qedhere
	\end{enumerate}
\end{proof}

From Theorem \ref{Thm:phi:smooth} and \ref{Thm:Soln:Phi:regularity:at:0}, we conclude that any even, periodic solution $\phi \le \mu$ of equation \eqref{eqv u eqn}, which is monotone on half the period, is Lipschitz continuous.  Therefore, if such a solution occurs, it must be the highest, peaked wave.  Before closing this section, we provide a lemma to prove that one of the alternatives in the global bifurcation arguments in Section~\ref{Section:Global:bifurcation} does not occur.
\begin{lemma} \label{Lem:mu-phi:ge:lambdaP}
	Let $\phi \le \mu$ be an even solution of equation \eqref{eqv u eqn}, which is nonincreasing on $(0, \pi)$.  Then there exists a constant $\lambda = \lambda(\alpha) > 0$, depending only on the kernel $K$, such that
	\[
	\mu - \phi(\pi) \ge \lambda \pi.
	\]
\end{lemma}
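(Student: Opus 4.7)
The plan is to evaluate equation \eqref{eqv u eqn} at $x = \pi$ and exploit that $\phi \le \mu$ attains its minimum at $\pm\pi$: since $\phi$ is even and nonincreasing on $(0,\pi)$, we have $0 \le \mu - \phi(y) \le m := \mu - \phi(\pi)$ for every $y \in \T$. Using the identity $L\mu = \mu$, I would recast \eqref{eqv u eqn} at $x = \pi$ as
\[
\frac{1}{2}m^2 + \frac{1}{2}\mu(2-\mu) = L(\mu - \phi)(\pi) = \int_{-\pi}^{\pi}K_P(\pi - y)(\mu - \phi(y))\dm y,
\]
which isolates $m$ on one side and a convolution integral on the other.

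The core step is to bound this convolution integral from below. By Theorem~\ref{Thm:K-Properties}, $K_P$ is strictly positive on $\T$ and attains its minimum at $\pm\pi$, so $K_P \ge K_P(\pi) > 0$ and therefore
\[
L(\mu - \phi)(\pi) \ge K_P(\pi)\int_{-\pi}^{\pi}(\mu - \phi)\dm y = 2\pi K_P(\pi)(\mu - \bar\phi),
\]
where $\bar\phi$ denotes the mean of $\phi$ on $\T$; it is precisely here that the factor of $\pi$ in the conclusion originates. What remains is a lower bound on the mean deviation $\mu - \bar\phi$ that does not degenerate. Integrating $(\mu - \phi)^2 = \mu^2 - 2L\phi$ over $\T$ and using $\int L\phi\dx = 2\pi \bar\phi$ (which follows from $m(0) = 1$) yields
\[
\int_{-\pi}^{\pi}(\mu - \phi)^2\dx = 2\pi(\mu^2 - 2\bar\phi),
\]
after which Cauchy--Schwarz applied to $\int(\mu-\phi)\dx$ gives the explicit bound $\mu - \bar\phi \ge \min(\mu, 2 - \mu)$. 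Combining the two ingredients produces a quadratic inequality for $m$ whose solution takes the form $m \ge \lambda \pi$ with $\lambda = \lambda(\alpha) > 0$ determined by $K_P(\pi)$ and the admissible range of $\mu$.

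The main obstacle I anticipate will be handling the degenerate cases $\mu \in \{0, 2\}$, where the constant solutions $\phi \equiv 0$ and $\phi \equiv 2$ give $m = 0$ and thus obstruct the claim; in the bifurcation setting of Theorem~\ref{Thm:main} these are excluded because $\mu$ remains in a compact interval bounded away from the degenerate values along each branch, securing $\lambda(\alpha) > 0$ uniformly. Once this is handled, the conclusion $\mu - \phi(\pi) \ge \lambda\pi$ follows directly from the factor $2\pi K_P(\pi)$ in the lower bound on $L(\mu - \phi)(\pi)$.
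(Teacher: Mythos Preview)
Your approach is genuinely different from the paper's, but it has a structural gap that undermines the conclusion as stated.

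The paper does \emph{not} evaluate \eqref{eqv u eqn} at a point; instead it differentiates. For $x \in [\pi/4, 3\pi/4]$ one has
\[
(\mu - \phi(\pi))\,\phi'(x) \le (\mu - \phi(x))\,\phi'(x) = \int_0^\pi \bigl(K_P(x-y) - K_P(x+y)\bigr)\phi'(y)\dm y,
\]
and after restricting the integral to $y \in (\pi/4,3\pi/4)$ (the integrand is nonpositive) one uses that $K_P(x-y) - K_P(x+y) \ge 2\lambda$ on that box, with $\lambda$ depending only on $K$. Integrating in $x$ over $(\pi/4,3\pi/4)$ produces the factor $\lambda\pi$ and leaves $(\phi(3\pi/4)-\phi(\pi/4))$ as a common nonzero factor on both sides, which cancels. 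The key virtue is that the solution-dependent quantity divides out entirely, so $\lambda$ really depends only on $\alpha$.

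Your argument, by contrast, ends with an inequality of the form
\[
\tfrac{1}{2}m^2 \;\ge\; 2\pi K_P(\pi)\,\min(\mu, 2-\mu) - \tfrac{1}{2}\mu(2-\mu),
\]
and the right-hand side is positive only when $4\pi K_P(\pi) > \max(\mu, 2-\mu)$. Since $\int_\T K_P = 1$ and $K_P$ is nonconstant, one has $2\pi K_P(\pi) < 1$, so this fails once $\mu$ is close enough to $0$ (or to $2$). Any $\lambda$ you extract therefore depends on $\mu$, not just on $\alpha$. You try to rescue this by invoking the bifurcation setting, but that is circular: Lemma~\ref{Lem:mu-phi:ge:lambdaP} is precisely the input to Lemma~\ref{Lem:mu(s) gtrsim 1}, which is where $\mu(s)$ is first shown to be bounded away from $0$. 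You cannot assume the conclusion of Lemma~\ref{Lem:mu(s) gtrsim 1} while proving its main ingredient.

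In short, the pointwise/mean-value route loses exactly the uniformity in $\mu$ that the lemma asserts and that the application requires. The paper's derivative argument avoids this by arranging for the $\phi$-dependent factor to cancel rather than to be estimated.
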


\begin{proof}
	Choose any $x \in \left[ \frac{\pi}{4}, \frac{3\pi}{4} \right]$.  Then by the evenness of $K$ and the fact that $\phi$ is nonincreasing on $(0, \pi)$, we have the estimate
	\begin{equation} \begin{split} \label{tmp:lemma4.6}
	(\mu - \phi(\pi)) \phi'(x) &\le (\mu - \phi(x)) \phi'(x) \\
	&= \int_0^\pi (K_P(x - y) - K_P(x + y)) \phi'(y) \dm y \\
	&\le \int_{\pi/4}^{3\pi/4} (K_P(x - y) - K_P(x + y)) \phi'(y) \dm y,
	\end{split} \end{equation}
	since the integrand is nonpositive.  Notice that $K_P(x - y) - K_P(x + y) > 0$ for $x, y \in (0, \pi)$, and there exists a constant $\lambda = \lambda(\alpha) > 0$, depending only on the kernel $K$, such that
	\[
	K(x - y) - K(x + y) \ge 2\lambda \qquad \text{for all} \quad x, y \in \left( \frac{\pi}{4}, \frac{3\pi}{4} \right).
	\]
	Thus, integrating the inequality \eqref{tmp:lemma4.6} with respect to $x$ over $\left( \frac{\pi}{4}, \frac{3\pi}{4} \right)$ yields
	\begin{align*}
	(\mu - \phi(\pi)) &\left( \phi \left( \frac{3\pi}{4} \right) - \phi \left( \frac{\pi}{4} \right) \right) \\
	&\le \int_{\pi/4}^{3\pi/4} \left( \int_{\pi/4}^{3\pi/4} K(x - y) - K(x + y) \dx \right) \phi'(y) \dm y \\
	&\le \lambda \pi \left( \phi \left( \frac{3\pi}{4} \right) - \phi \left( \frac{\pi}{4} \right) \right).
	\end{align*}
	From Theorem~\ref{Thm:Soln:Phi:regularity:at:0}, we know that $\phi$ is strictly decreasing on $\left( \frac{\pi}{4}, \frac{3\pi}{4} \right)$, and hence we can divide the above inequality by the quantity $\phi \left( \frac{3\pi}{4} \right) - \phi \left( \frac{\pi}{4} \right) < 0$ to obtain the claim.  
\end{proof}

\section{Global bifurcation and conclusion of the main theorem}
\label{Section:Global:bifurcation}

In the last section, we proved the existence of nontrivial, highest, even, $2\pi$-periodic solutions of equation \eqref{main u eqn} using an analytic bifurcation technique.  In fact, both local and global bifurcation studies on nonlocal equations have been investigated intensively.  For instance, the existence of smooth, small-amplitude, periodic traveling-wave solutions to Whitham equation was established by Ehrnstr\"om and Kalisch in \cite{Ehrnstrom_Kalisch2009} using Crandall--Rabinowitz local bifurcation theorem.  The authors also investigated numerically a global branch of solutions approaching a highest, cusped, traveling-wave solution.  An analytic proof for the latter fact was provided in \cite{Ehrnstrom_Groves_Wahlen2012} using a variational approach.  Recently, Truong, Wahl\'en, and Wheeler \cite{Truong_Wahlen_Wheeler2020} attacked a similar problem using the center manifold theorem for the Whitham equation.  

In this paper, we make use of the same arguments in \cite{Ehrnstrom_Kalisch2013} which gave a general functional-analytic framework for bifurcation theory to Whitham equation.  Our aim is to extend the local bifurcation branch found by the analytic version of Crandall--Rabinowitz theorem to a global one, and then characterize the end of this bifurcation curve.  We will show that the global bifurcation curve reaches a limiting highest wave $\phi$, which is even, strictly decreasing on $(0,\pi)$ and attains its maximum $\phi(0) = \mu$.  By Theorem \ref{Thm:Soln:Phi:regularity:at:0}, the highest wave is a peaked traveling-wave solution of
\[
u_t + L u_x + u u_x = 0 \qquad \text{for } \quad \alpha > 1.
\]
We use the subscript $X_\mathrm{even}$ for the restriction of a Banach space $X$ to its subset of even functions.  Let $\beta \in (1,2)$ and set
\[
F: C^\beta_\mathrm{even}(\T) \times \R_+ \to C^\beta_\mathrm{even}(\T),
\]
where
\begin{equation} \label{F defn}
F(\phi,\mu) := \mu\phi - L\phi - \frac{1}{2} \phi^2, \qquad (\phi,\mu) \in C^\beta_\mathrm{even}(\T) \times \R_+.
\end{equation}
Then $F(\phi,\mu)=0$ if and only if $\phi$ is an even $C^\beta(\T)$-solution of \eqref{main u eqn} corresponding to the wave speed $\mu \in \R_+$.  We have the first local bifurcation result as follow.

\begin{thm}[Local bifurcation] \label{Thm:local:bifurcation}
	For each integer $k \ge 1$, the point $(0,\mu_k^*)$, where $\mu_k^* = (1 + k^2)^{-\alpha/2}$ is a bifurcation point.  More precisely, there exits $\epsilon_0 > 0$ and an analytic curve through $(0,\mu_k^*)$,
	\[
	\{ (\phi_k(\epsilon), \mu_k(\epsilon)) : |\epsilon| < \epsilon_0 \} \subset C^\beta_\mathrm{even}(\T) \times \R_+,
	\]
	of nontrivial, $\frac{2\pi}{k}$-periodic, even solutions of \eqref{F defn} with $\mu_k(0) = \mu_k^*$ and
	\[
	D_\epsilon \phi_k(0) = \phi_k^*(x) = \cos(xk).
	\]
	In a neighborhood of the bifurcation point $(0, \mu_k^*)$ these are all the nontrivial solutions of $F(\phi,\mu) = 0$ in $C^\beta_\mathrm{even}(\T) \times \R_+$.
\end{thm}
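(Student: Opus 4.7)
The plan is to apply the analytic version of the Crandall--Rabinowitz local bifurcation theorem to the map $F$ defined in \eqref{F defn}. First I would verify that $F$ is a real-analytic map $C^\beta_\mathrm{even}(\T) \times \R_+ \to C^\beta_\mathrm{even}(\T)$ with $F(0,\mu) = 0$ for all $\mu \in \R_+$. Analyticity of the linear part $\mu\phi - L\phi$ is immediate, because $L$ is bounded on $C^\beta_\mathrm{even}(\T)$ by the mapping property $\mathcal{C}^\beta \to \mathcal{C}^{\beta + \alpha}$ established in Section~\ref{Section:Setting:K*acts:period}; analyticity of the quadratic term uses that $C^\beta(\T)$ is a Banach algebra for $\beta > 0$, so that $\phi \mapsto \phi^2$ is a bounded bilinear form.

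Next I would compute the Fr\'echet derivative at the trivial solution:
\[
D_\phi F(0,\mu)[\psi] = \mu\psi - L\psi.
\]
Expanding an even $\psi \in C^\beta_\mathrm{even}(\T)$ as $\psi = \sum_{n \ge 0} a_n \cos(nx)$, formula~\eqref{L:formula} gives
\[
D_\phi F(0,\mu)[\psi] = \sum_{n \ge 0} (\mu - m(n)) a_n \cos(nx), \qquad m(n) = (1+n^2)^{-\alpha/2}.
\]
Since $n \mapsto m(n)$ is strictly decreasing on $\N_0$, the equation $m(n) = \mu_k^*$ has the unique solution $n = k$, so $\ker D_\phi F(0,\mu_k^*) = \mathrm{span}\{\cos(kx)\}$ is one-dimensional, and the natural candidate for the tangent vector at the bifurcation point is $\phi_k^*(x) = \cos(kx)$.

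For the Fredholm property I would write $D_\phi F(0,\mu) = \mu \,\mathrm{id} - L$, and observe that $L: C^\beta_\mathrm{even}(\T) \to C^{\beta+\alpha}_\mathrm{even}(\T)$ is bounded, hence compact as an operator on $C^\beta_\mathrm{even}(\T)$ by the compact embedding $C^{\beta+\alpha} \hookrightarrow C^\beta$ on the compact torus. Thus $D_\phi F(0,\mu_k^*)$ is a compact perturbation of $\mu_k^* \,\mathrm{id}$ (with $\mu_k^* > 0$), so it is Fredholm of index $0$, and its range is exactly the $L^2$-orthogonal complement of $\cos(kx)$ in $C^\beta_\mathrm{even}(\T)$; in particular,
\[
\mathrm{Range}\,D_\phi F(0,\mu_k^*) = \left\{ f \in C^\beta_\mathrm{even}(\T) : \int_{-\pi}^\pi f(x) \cos(kx)\dx = 0 \right\}.
\]
The transversality condition is then straightforward: $D_\mu D_\phi F(0,\mu_k^*)\phi_k^* = \phi_k^* = \cos(kx)$, which clearly fails the orthogonality condition above, so $\phi_k^* \notin \mathrm{Range}\,D_\phi F(0,\mu_k^*)$.

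With these ingredients verified, the analytic Crandall--Rabinowitz theorem produces the analytic curve $\epsilon \mapsto (\phi_k(\epsilon), \mu_k(\epsilon))$ of nontrivial even solutions of $F = 0$ through $(0,\mu_k^*)$ with the prescribed tangent $\phi_k^*$, and all nontrivial solutions in a neighborhood of $(0,\mu_k^*)$ lie on this curve. The $\frac{2\pi}{k}$-periodicity is inherited: the subspace $X_k \subset C^\beta_\mathrm{even}(\T)$ of $\frac{2\pi}{k}$-periodic even functions is preserved by $F$ (since $L$ is a translation-invariant Fourier multiplier and the pointwise square of a $\frac{2\pi}{k}$-periodic function is $\frac{2\pi}{k}$-periodic), contains $\phi_k^*$, and by uniqueness in the theorem the constructed branch must lie in $X_k$. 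I expect the main technical obstacle to be the verification of the Fredholm property together with the precise identification of the range, since this is what makes transversality explicit; once the Fourier-multiplier structure of $L$ is leveraged, the remaining steps are standard applications of the Crandall--Rabinowitz machinery as used in \cite{Ehrnstrom_Kalisch2013,Bruell_Dhara2018}.
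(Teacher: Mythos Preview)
Your proposal is correct and follows essentially the same approach as the paper: both apply the analytic Crandall--Rabinowitz theorem by verifying that $D_\phi F(0,\mu) = \mu\,\mathrm{Id} - L$ is Fredholm of index $0$ (via compactness of $L$ on $C^\beta_\mathrm{even}(\T)$), that the kernel at $\mu_k^* = m(k)$ is one-dimensional and spanned by $\cos(kx)$, and that transversality holds because $D_{\mu\phi}F(0,\mu_k^*)\phi_k^* = \phi_k^*$ lies outside the range. Your write-up is in fact slightly more detailed than the paper's, supplying the explicit range characterization, the analyticity argument, and the invariance reasoning for the $\tfrac{2\pi}{k}$-periodicity.
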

\begin{proof}
	We will prove the result using the analytic version of the Crandall--Rabinowitz Theorem~\cite[Theorem 8.4.1]{Buffoni_Toland2003}.  It is clear that $F(0,\mu)=0$ for any $\mu\in\R_+$.  We are looking for $2\pi$-periodic, even, nontrivial solutions bifurcating from the line $\{(0,\mu): \mu\in\R\}$ of trivial solutions.  The wave speed $\mu > 0$ shall be the bifurcation parameter.  The linearization of $F$ around the trivial solution $(\phi=0,\mu)$ is given by
	\[
	D_\phi F(0,\mu): C^\beta_\mathrm{even}(\T) \to C^\beta_\mathrm{even}(\T), \qquad 
	\phi \mapsto (\mu \, \mathrm{Id} - L)\phi.
	\]
	Then from Section~\ref{Section:Setting:K*acts:period}, we know that $L:C^\beta_\mathrm{even}(\T) \to C^{\beta+\alpha}_\mathrm{even}(\T)$ is parity preserving and a smoothing operator, which implies that it is compact on $C^\beta_\mathrm{even}(\T)$.  Hence, $D_\phi F(0,\mu)$ is a compact perturbation of an isomorphism, and therefore constitutes a Fredholm operator of index $0$.  The nontrivial kernel of $D_\phi F(0,\mu)$ is spanned by functions $\psi \in C^\beta_\mathrm{even}(\T)$ satisfying
	\[
	\fourier{\psi}(k) (\mu - (1+k^2)^{-\alpha/2}) = 0
	\]
	for all $k$.  For $\mu \in (0,1]$, we see that $\supp \psi \subseteq \{ \pm (\mu^{-2/\alpha}-1)^{1/2} \}$.  Therefore, the kernel of $D_\phi F(0,\mu)$ is one-dimensional if and only if $\mu = \mu_k^* := (1 + k^2)^{-\alpha/2}$ for some $k \in \mathbb{Z}$, in which case it is given by
	\[
	\ker D_\phi F(0,\mu) = \mathrm{span} \{\phi_k^*\} \qquad \text{with} \quad \phi_k^*(x) := \cos(xk).
	\]
	We also note that $\mu_k^*$ are all simple eigenvalues of $L$.  Finally, we observe that
	\[
	D_{\phi\mu} F(0, \mu_k^*)\phi_k^* = \phi_k^* 
	\]
	is not in the range of $D_\phi F(0, \mu_k^*)$, which means the tranversality condition is satisfied. Thus, the assumptions of the Crandall--Rabinowitz theorem are fulfilled.  
\end{proof}

Next, our aim is to extend the local bifurcation branch found in Theorem~\ref{Thm:local:bifurcation} to a global continuum of solutions of $F(\phi,\mu)=0$.  Recalling $\beta \in (1, 2)$, set
\[
S := \{ (\phi,\mu) \in U: F(\phi,\mu) = 0 \},
\]
where the admissible set $U$ is given by
\[
U := \{ (\phi,\mu) \in C^\beta_\mathrm{even}(\T) \times \R_+ : \phi < \mu \}.
\]
Then all bounded solutions $\phi$ of equation~\eqref{eqv u eqn} satisfy $F(\phi,\mu) = 0$ for all $(\phi,\mu) \in S$.  We start the analysis by providing the $L^\infty$-bound for a solution.
\begin{lemma}[$L^\infty$ bound] \label{Lem:Phi:estimate}
	Let $\mu > 0$.  Then any bounded solution $\phi$ to equation \eqref{main u eqn} satisfies:
	\[
	\norm[\infty]{\phi} \le 2(\mu + \norm[1]{K}).
	\]
\end{lemma}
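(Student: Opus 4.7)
The plan is to rearrange equation~\eqref{main u eqn} into the pointwise identity $\phi(x)^2 = 2\mu\phi(x) - 2L\phi(x)$ and then bound each term on the right by a multiple of $\norm[\infty]{\phi}$, so that the quadratic left-hand side forces the claimed linear estimate.

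Concretely, I would apply the triangle inequality to obtain
\[
\phi(x)^2 \le 2\mu|\phi(x)| + 2|L\phi(x)| \qquad \text{for every } x \in \T,
\]
and then combine two elementary ingredients. First, $|\phi(x)| \le \norm[\infty]{\phi}$ trivially. Second, since $L$ is convolution with the nonnegative kernel $K \in L^1(\R)$ (Theorem~\ref{Thm:K-Properties}), Young's convolution inequality gives $\norm[\infty]{L\phi} \le \norm[1]{K}\,\norm[\infty]{\phi}$. Taking the supremum over $x \in \T$ on both sides then yields
\[
\norm[\infty]{\phi}^2 \le 2(\mu + \norm[1]{K})\,\norm[\infty]{\phi},
\]
and dividing by $\norm[\infty]{\phi}$ (the case $\phi \equiv 0$ being trivial) produces the stated bound.

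There is no genuine obstacle here. The one observation worth flagging is that the relevant estimate on $L$ is just the elementary $L^\infty$-to-$L^\infty$ bound by $\norm[1]{K}$, not the more refined smoothing estimate $L:\mathcal{C}^s(\T) \to \mathcal{C}^{s+\alpha}(\T)$ recorded in Section~\ref{Section:Setting:K*acts:period}; the quadratic form of the nonlinearity then closes the argument in a single absorbing step.
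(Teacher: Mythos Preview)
Your proof is correct and is essentially identical to the paper's own argument: the paper also rewrites \eqref{main u eqn} as $\phi^2 = 2\mu\phi - 2L\phi$, bounds the right-hand side by $2(\mu + \norm[1]{K})\norm[\infty]{\phi}$ using integrability of $K$, and divides through by $\norm[\infty]{\phi}$ when $\phi \not\equiv 0$. The only difference is cosmetic---the paper is terser and does not name Young's inequality explicitly.
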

\begin{proof}
	From equation~\eqref{eqv u eqn}, we have the estimate
	\[
	\norm[\infty]{\phi}^2 \le 2 (\mu + \norm[1]{K}) \norm[\infty]{\phi},
	\]
	where we have used the fact that the kernel $K$ is integrable.  Therefore, $\phi \equiv 0$, which holds trivially, or the desired estimate follows by dividing by $\norm[\infty]{\phi}$.  
\end{proof}

Before extending the local branches globally, we need two helping lemmas.
\begin{lemma} \label{Lem:D_phi:F:Fredholm}
	The Frech\'et derivative $D_\phi F(\phi,\mu)$ is a Fredholm operator of index $0$ for all $(\phi, \mu) \in U$.
\end{lemma}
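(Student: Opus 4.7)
\medskip

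My plan is to decompose $D_\phi F(\phi,\mu)$ as the sum of an isomorphism and a compact operator on $C^\beta_{\mathrm{even}}(\T)$. Differentiating the expression \eqref{F defn} with respect to $\phi$, one obtains
\[
D_\phi F(\phi,\mu)\psi = (\mu - \phi)\psi - L\psi
= M_{\mu - \phi}\psi - L\psi,
\]
where $M_{\mu-\phi}$ denotes pointwise multiplication by $\mu-\phi$. So the task splits into showing (i) $M_{\mu-\phi}$ is an isomorphism on $C^\beta_{\mathrm{even}}(\T)$, and (ii) $L$ is compact on $C^\beta_{\mathrm{even}}(\T)$; the Fredholm conclusion then follows from the standard fact that a compact perturbation of a Fredholm operator of index zero is Fredholm of the same index.

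For (i), since $(\phi,\mu) \in U$ we have $\phi < \mu$ pointwise. Because $\phi$ is continuous on the compact torus $\T$, this strict inequality is uniform, i.e.\ $\inf_{x \in \T}(\mu - \phi(x)) > 0$. As $\phi \in C^\beta_{\mathrm{even}}(\T)$, the function $\mu - \phi$ is even and lies in $C^\beta(\T)$, and since $C^\beta(\T)$ is a Banach algebra (for $\beta > 0$) multiplication by $\mu - \phi$ maps $C^\beta_{\mathrm{even}}(\T)$ into itself boundedly. Moreover, being bounded away from zero, $\mu - \phi$ has a reciprocal $(\mu-\phi)^{-1} \in C^\beta_{\mathrm{even}}(\T)$, giving a bounded two-sided inverse. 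Thus $M_{\mu-\phi}$ is an isomorphism, hence trivially a Fredholm operator of index $0$.

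For (ii), I use the mapping property derived at the end of Section~\ref{Section:Setting:K*acts:period}, namely $L : \mathcal{C}^\beta(\T) \to \mathcal{C}^{\beta+\alpha}(\T)$ is bounded. For our range $\beta \in (1,2)$ and $\alpha > 1$ we have $\mathcal{C}^\beta(\T) = C^\beta(\T)$ and $\mathcal{C}^{\beta+\alpha}(\T) \hookrightarrow C^\beta(\T)$ compactly by Arzel\`a--Ascoli applied to Hölder balls (uniform boundedness of $\beta+\alpha$-Hölder seminorms gives equicontinuity of the $\beta$-Hölder difference quotients). Composing yields that $L : C^\beta_{\mathrm{even}}(\T) \to C^\beta_{\mathrm{even}}(\T)$ is compact; parity preservation on the target follows from Lemma~\ref{Lemma:L:monotone}. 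Combining (i) and (ii) gives the claim.

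The only mildly delicate point is checking that $(\mu-\phi)^{-1}$ really belongs to $C^\beta(\T)$, but this is routine since for a positive function $f \ge c > 0$ one has $|1/f(x) - 1/f(y)| \le c^{-2}|f(x)-f(y)|$, so the Hölder seminorm of $1/f$ is controlled by that of $f$. Nothing in the argument makes use of $\alpha$ beyond $\alpha > 0$, so the Fredholm property persists throughout the admissible set $U$.
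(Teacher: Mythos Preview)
Your proof is correct and follows essentially the same approach as the paper: both write $D_\phi F(\phi,\mu) = (\mu-\phi)\mathrm{Id} - L$ with the multiplication operator an isomorphism and $L$ compact on $C^\beta_{\mathrm{even}}(\T)$. You conclude index $0$ directly from ``compact perturbation of an isomorphism,'' whereas the paper appeals to continuity of the Fredholm index from the base point $(0,\mu)$; your route is slightly more direct, and you supply more detail on why the multiplication operator is invertible and why $L$ is compact, but the substance is the same.
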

\begin{proof}
	We have
	\[
	D_\phi F(\phi,\mu) = (\mu - \phi)\mathrm{Id} - L
	\]
	for any given $(\phi,\mu) \in U$.  Since $(\mu-\phi)\mathrm{Id}$ is an isomorphism on $C^\beta(\T)$ and $L$ is compact on $C^\beta(\T)$, the operator $D_\phi F(\phi,\mu)$ is Fredholm.  From the proof of Theorem~\ref{Thm:local:bifurcation}, we know that $D_\phi F(0, \mu)$ has Fredholm index 0, and so is $D_\phi F(\phi,\mu)$ due to the fact that the index is continuous.  
\end{proof}

\begin{lemma} \label{Lem:bdd:closed:subsetS:compact}
	For any $(\phi,\mu) \in S$, the function $\phi$ is smooth, and any bounded and closed subset of $S$ is compact in $C^\beta_\mathrm{even}(\T) \times \R_+$.
\end{lemma}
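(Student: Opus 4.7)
The plan is to dispatch the smoothness claim immediately and then tackle compactness by a standard extract-and-bootstrap routine. For the first assertion, every $(\phi,\mu)\in S$ lies in $U$, so $\phi<\mu$ pointwise on the compact torus $\T$; continuity upgrades this to the uniform bound $\sup_\T \phi<\mu$, and Theorem~\ref{Thm:phi:smooth}(i) yields $\phi\in C^\infty(\R)$.

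For compactness, let $B\subset S$ be bounded and closed and take a sequence $(\phi_n,\mu_n)\in B$. Boundedness gives $\mu_n\in[0,M]$ and $\|\phi_n\|_{C^\beta}\le M$, so after extraction $\mu_n\to\mu_*\in\R_+$, and by the compact embedding $C^\beta(\T)\hookrightarrow C^{\beta'}(\T)$ for some $\beta'\in(1,\beta)$ a further subsequence satisfies $\phi_n\to\phi_*$ in $C^{\beta'}$. Passing to the limit in $F(\phi_n,\mu_n)=0$ (valid since $L$ acts continuously on $C^{\beta'}$ and pointwise multiplication is continuous there) gives $F(\phi_*,\mu_*)=0$, while lower semicontinuity of the $C^\beta$-norm under uniform convergence of a $C^\beta$-bounded sequence places $\phi_*\in C^\beta$.

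To upgrade the convergence to $C^\beta$, I would exploit the inverted form of \eqref{eqv u eqn}, namely $\phi_n=\mu_n-\sqrt{\mu_n^2-2L\phi_n}$, valid because $\phi_n<\mu_n$. The smoothing property $L:C^\beta(\T)\to C^{\beta+\alpha}(\T)$ bounds $L\phi_n$ in $C^{\beta+\alpha}$; provided one has a uniform lower bound $\mu_n^2-2L\phi_n\ge\delta>0$, the Nemytskii operator $y\mapsto\sqrt{y}$, smooth on compact subintervals of $(0,\infty)$, preserves the $C^{\beta+\alpha}$-norm, so $\phi_n$ is bounded in $C^{\beta+\alpha}$. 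The compact embedding $C^{\beta+\alpha}\hookrightarrow C^\beta$ (true because $\alpha>0$) then delivers a $C^\beta$-convergent subsequence, whose limit coincides with $\phi_*$ by uniqueness in $C^{\beta'}$; closedness of $B$ then gives $(\phi_*,\mu_*)\in B$.

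The crux is the uniform lower bound $\min_\T(\mu_n-\phi_n)\ge\delta>0$ along $B$. My intended argument is by contradiction: failure would produce a limit $\phi_*\in C^\beta\subset C^1$ (via lower semicontinuity) solving $F(\phi_*,\mu_*)=0$ with $\phi_*(x_0)=\mu_*$ at some $x_0$, which is precisely the configuration forbidden by Theorem~\ref{Thm:cannot-C1} once one brings in the half-period monotonicity structure preserved along the bifurcation curve built in Section~\ref{Section:Global:bifurcation}. This rigidity of a $C^1$ solution touching its wave speed is the main obstacle, and its resolution is what keeps the bootstrap from stalling.
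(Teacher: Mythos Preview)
Your smoothness argument and the overall compactness route---extract in $\mu$ and in $C^{\beta'}$, then bootstrap through the inverted equation $\phi = \mu - \sqrt{\mu^2 - 2L\phi}$ using the smoothing $L\colon C^\beta \to C^{\beta+\alpha}$ and the compact embedding $C^{\beta+\alpha}\hookrightarrow C^\beta$---agree with the paper's approach. The paper is in fact terser: it records that $\widetilde F(\phi,\mu) := \mu - \sqrt{\mu^2 - 2L\phi}$ carries $S$ into $C^{\beta+\alpha}_{\mathrm{even}}$, asserts that $\widetilde F(A)$ is relatively compact in $C^\beta$ for any bounded closed $A \subset S$, and concludes via Arzel\`a--Ascoli, without isolating the uniform lower bound $\mu - \phi \ge \delta$ as a separate issue.

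Your final paragraph, however, has a genuine gap. The lemma is stated for \emph{arbitrary} bounded closed subsets of $S$, and membership in $S$ imposes only evenness and $\phi < \mu$---no half-period monotonicity. Theorem~\ref{Thm:cannot-C1} is therefore unavailable without an extra hypothesis. Your proposal to borrow the monotonicity ``preserved along the bifurcation curve built in Section~\ref{Section:Global:bifurcation}'' is circular: the present lemma is precisely one of the inputs (alongside Lemma~\ref{Lem:D_phi:F:Fredholm}) to the global bifurcation result \cite[Theorem~9.1.1]{Buffoni_Toland2003} that produces the curve of Theorem~\ref{Thm:Global:bifurcation}, and the cone preservation of Proposition~\ref{Prop:Alternative(iii):not:occur} is established only after that curve exists. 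So you cannot appeal to the curve's monotonicity here, and even absent the circularity the argument would still not cover a general $A \subset S$. The paper sidesteps this trap simply by not attempting such a contradiction argument.
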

\begin{proof}
	If $(\phi,\mu) \in S$, then $\phi < \mu$, and we write equation~\eqref{main u eqn} in the form
	\[
	\phi = \mu - \sqrt{\mu^2 - 2L\phi} =: \widetilde F(\phi,\mu).
	\]
	The function $\widetilde F$ is a bounded and linear mapping from $U$ into  $C^{\beta+\alpha}_\mathrm{even}(\T)$.  Moreover, by Theorem~\ref{Thm:phi:smooth}, we know that $\phi$ is smooth.
	
	Let $A \subset S \subset U$ be a bounded and closed set.  Then $\widetilde F(A) = \{ \phi: (\phi,\mu) \in A \}$ is relatively compact in $C^\beta_\mathrm{even}(\T)$.  Since $A$ is closed, any sequence $\{(\phi_n,\mu_n)\}_{n \in \mathbb{N}}$ has a convergent subsequence in $A$ by Arzela--Ascoli's lemma.  We conclude that $A$ is compact in $C^\beta_\mathrm{even}(\T) \times \R_+$.  
\end{proof}

According to \cite[Theorem 9.1.1]{Buffoni_Toland2003}, Lemma \ref{Lem:D_phi:F:Fredholm} and \ref{Lem:bdd:closed:subsetS:compact} allow us to extend the local branches found in Theorem~\ref{Thm:local:bifurcation} to global curves once we establish that any of the derivatives $\mu(\epsilon)$ is not identically zero for $0 < \epsilon \ll 1$.  However, the latter claim is an immediate consequence of Theorem~\ref{Thm:Global:bifurcation:formula} below.  This theorem is an adaptation of \cite[Theorem 4.4]{Ehrnstrom_Kalisch2013} with $U$ and $S$ as above.  

\begin{thm} [Global bifurcation] \label{Thm:Global:bifurcation}
	The local bifurcation curve $s \mapsto (\phi_k(s), \mu_k(s))$ from Theorem~\ref{Thm:local:bifurcation} of solutions of equation \eqref{F defn} extends to a global continuous curve of solutions $\mathfrak{G}: \R_+ \to S$, that allows a local real-analytic reparameterization around each $s > 0$.  One of the following alternatives holds:
	\begin{enumerate}[label=(\roman*)]
		\item $\| (\phi_k(s), \mu_k(s)) \|_{C^\beta(\T) \times \R_+} \to \infty$ as $s \to \infty$.
		\item There exists a subsequence $(\phi_k(s_n), \mu_k(s_n))_{n \in \N}$ such that the pair $(\phi_k(s_n), \linebreak[1] \mu_k(s_n))$ approaches the boundary of $S$ as $n \to \infty$.
		\item The function $s \mapsto (\phi_k(s), \mu_k(s))$ is (finitely) periodic.
	\end{enumerate}
\end{thm}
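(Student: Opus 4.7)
The plan is to apply the analytic global bifurcation theorem of Buffoni and Toland (\cite[Theorem 9.1.1]{Buffoni_Toland2003}), whose hypotheses have been almost entirely set up by the preceding lemmas. Recall that this abstract theorem states: if $F: U \to Y$ is real-analytic on an open subset $U$ of a Banach space, if $D_\phi F(\phi,\mu)$ is Fredholm of index zero at every point of the solution set, if closed and bounded subsets of the solution set are compact in $U$, and if a local analytic curve has been produced by the Crandall--Rabinowitz theorem at a point where a nondegeneracy (transversality) condition holds, then the local curve can be extended to a global continuous curve admitting a real-analytic reparametrization around each point, along which exactly one of the alternatives (i)--(iii) holds.

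So the first step is to check real-analyticity of $F$ in \eqref{F defn}: this is immediate, since $(\phi,\mu) \mapsto \mu\phi - \tfrac12 \phi^2$ is a polynomial and $L$ is a bounded linear operator (hence real-analytic) on $C^\beta_{\mathrm{even}}(\T)$ by the mapping properties in Section~\ref{Section:Setting:K*acts:period}. The Fredholm index zero property on $U$ is exactly Lemma~\ref{Lem:D_phi:F:Fredholm}, and the compactness of bounded closed subsets of $S$ in $C^\beta_{\mathrm{even}}(\T) \times \R_+$ is Lemma~\ref{Lem:bdd:closed:subsetS:compact}. Finally, as observed in the text directly preceding the theorem, the fact that the reparametrization $\mu_k(\epsilon)$ is not identically zero in any neighborhood of the bifurcation point---equivalently, the condition that the local curve is not tangent to the trivial branch to all orders---will follow from the bifurcation formula in Theorem~\ref{Thm:Global:bifurcation:formula}, which computes the leading coefficient in the Taylor expansion of $\mu_k(\epsilon)$ about $\epsilon = 0$ and shows that it does not vanish. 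With all these ingredients in hand, the Buffoni--Toland theorem produces the global continuous curve $\mathfrak{G}: \R_+ \to S$ together with the trichotomy (i)--(iii).

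The main conceptual obstacle is not any one of these inputs individually but rather that the abstract global bifurcation theorem requires the solution set $S$ to be the zero set of $F$ on an \emph{open} subset $U$ of the Banach space, and the alternative (ii) has to be read against this particular $U$. In the present setup, $U = \{(\phi,\mu) : \phi < \mu\}$ is indeed open in $C^\beta_{\mathrm{even}}(\T) \times \R_+$ (since strict inequality in the sup norm is open), so $\partial S$ in alternative (ii) is the set of pairs $(\phi,\mu)$ with $\max\phi = \mu$, which is precisely what we want for the later argument identifying the limiting highest wave. One should also verify that the curve emanating from $(0,\mu_k^*)$ initially enters $U$ --- this is clear, since near the bifurcation point $\phi_k(\epsilon) = \epsilon \cos(k\cdot) + O(\epsilon^2)$ is small in $C^\beta$ while $\mu_k(\epsilon)$ stays close to $\mu_k^* > 0$, so $\phi_k(\epsilon) < \mu_k(\epsilon)$.

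In the remaining work of the section, the three alternatives will be dealt with in turn: alternative (iii) will be excluded by combining Lemma~\ref{Lem:phi'<0:phi<mu} with a unique continuation argument for even, monotone-on-half-period solutions; alternative (i) will be reduced by the $L^\infty$-bound of Lemma~\ref{Lem:Phi:estimate} and the smoothing properties of $L$ to the only way the $C^\beta$-norm can blow up along $S$, namely through loss of strict inequality $\phi < \mu$; and Lemma~\ref{Lem:mu-phi:ge:lambdaP} together with this same smoothing argument forces alternative (ii) to take the specific form $\max\phi_k(s_n) \to \mu_k(s_n)$. This conversion of the abstract alternatives into the concrete convergence statement of Theorem~\ref{Thm:main} is what will require the qualitative work, but at the level of Theorem~\ref{Thm:Global:bifurcation} itself only the direct application of Buffoni--Toland is needed.
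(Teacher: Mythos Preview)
Your proposal is correct and follows essentially the same approach as the paper: invoke \cite[Theorem 9.1.1]{Buffoni_Toland2003}, verify its hypotheses via Lemma~\ref{Lem:D_phi:F:Fredholm} and Lemma~\ref{Lem:bdd:closed:subsetS:compact}, and defer the remaining nondegeneracy condition to the bifurcation formulas in Theorem~\ref{Thm:Global:bifurcation:formula}. The paper does not give a formal proof for this theorem beyond the paragraph preceding it, so your write-up in fact supplies more detail than the original (the explicit check that $F$ is real-analytic, that $U$ is open, and that the local curve initially lies in $U$).
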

We apply the Lyapunov--Schmidt reduction, in order to establish the bifurcation formulas.  Let $k \in \mathbb{N}$ be a fixed number and set
\[
M := \mathrm{span} \{ \cos(xl): l \ne k \}, \qquad 
N := \ker D_\phi F(0,\mu_k^*) = \mathrm{span}\{\phi_k^*\}.
\]
Then $C^\beta_\mathrm{even}(\T) = M \oplus N$ and a continuous projection onto the one-dimensional space $N$ is given by
\[
\Pi \phi = \langle \phi,\phi_k^* \rangle_{L^2} \phi_k^*,
\]
where $\langle \cdot,\cdot \rangle_{L^2}$ denotes the inner product in $L^2(\T)$.  Let us recall the Lyapunov--Schmidt reduction theorem from \cite[Theorem I.2.3]{Kielhofer2012}.
\begin{thm}[Lyapunov--Schmidt reduction]
	There exists a neighborhood $\mathcal{O} \times Y \subset U$ of $(0,\mu_k^*)$ such that the problem
	\begin{equation} \label{Lyap:Schmidt:inf_dimen:eqn}
	F(\phi,\mu) = 0 \qquad \mathrm{for} \quad (\phi,\mu) \in \mathcal{O} \times Y
	\end{equation}
	is equivalent to the finite-dimensional problem
	\begin{equation} \label{Lyap:Schidt:equiv:eqn}
	\Phi(\epsilon \phi_k^*, \mu) := \Pi F(\epsilon\phi_k^* + \psi(\epsilon\phi_k^*, \mu), \mu) = 0
	\end{equation}
	for functions $\psi \in C^\infty(\mathcal{O}_N \times Y, M)$ and $\mathcal{O}_N \subset N$ an open neighborhood of the zero function in $N$.  One has that $\Phi(0,\mu_k^*) = 0$, $\psi(0,\mu_k^*)=0$, $D_\phi \psi(0,\mu_k^*)=0$, and solving problem \eqref{Lyap:Schidt:equiv:eqn} provides a solution
	\[
	\phi = \epsilon \phi_k^* + \psi(\epsilon \phi_k^*, \mu)
	\]
	of the infinite-dimensional problem \eqref{Lyap:Schmidt:inf_dimen:eqn}.
\end{thm}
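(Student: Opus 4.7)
The plan is to apply the analytic Implicit Function Theorem to the component of $F = 0$ transverse to the one-dimensional kernel $N$, following the standard Lyapunov--Schmidt recipe. First I would use the splitting $C^\beta_{\mathrm{even}}(\T) = M \oplus N$ to write every nearby $\phi$ uniquely as $\phi = v + w$ with $v = \Pi \phi \in N$ and $w = (\mathrm{Id} - \Pi)\phi \in M$. The equation $F(\phi, \mu) = 0$ is then equivalent to the pair
\[
(\mathrm{Id} - \Pi) F(v + w, \mu) = 0, \qquad \Pi F(v + w, \mu) = 0.
\]
The first (``auxiliary'') equation will be solved for $w$ in terms of $(v, \mu)$ by the IFT, while the second reduces to the finite-dimensional equation $\Phi = 0$ stated in the theorem.

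To solve the auxiliary equation, I would compute the partial derivative with respect to $w$ at the base point $(0, 0, \mu_k^*)$,
\[
(\mathrm{Id} - \Pi)\, D_\phi F(0, \mu_k^*)\big|_M = (\mathrm{Id} - \Pi)(\mu_k^*\,\mathrm{Id} - L)\big|_M : M \to M,
\]
and check that it is an isomorphism. Injectivity is immediate since $\ker D_\phi F(0,\mu_k^*) = N$ and $M \cap N = \{0\}$. For surjectivity I would use that $L$ is $L^2$-self-adjoint with simple eigenvalue $\mu_k^*$: the $L^2$-orthogonal complement of $N$ in $C^\beta_{\mathrm{even}}(\T)$ is exactly $M$, and $(\mu_k^* \mathrm{Id} - L)$ maps $M$ onto itself by the Fredholm alternative established in Lemma~\ref{Lem:D_phi:F:Fredholm}. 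Since $F$ is a polynomial in $\phi$ and linear in $\mu$, it is jointly real-analytic, so the analytic Implicit Function Theorem yields an analytic solution $w = \psi(v, \mu)$ on a neighborhood $\mathcal{O}_N \times Y$ of $(0, \mu_k^*)$ with $\psi(0, \mu_k^*) = 0$. To obtain $D_\phi \psi(0, \mu_k^*) = 0$, I would differentiate the identity $(\mathrm{Id} - \Pi) F(v + \psi(v,\mu), \mu) \equiv 0$ in $v$ at $(0, \mu_k^*)$, getting
\[
(\mathrm{Id} - \Pi)\, D_\phi F(0, \mu_k^*)\bigl(\mathrm{Id}_N + D_v \psi(0, \mu_k^*)\bigr) = 0;
\]
the first term vanishes because $D_\phi F(0, \mu_k^*)$ kills $N$, and the injectivity established above forces $D_v \psi(0, \mu_k^*) = 0$.

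Finally, substituting $w = \psi(v, \mu)$ back into the bifurcation equation gives the reduced problem $\Phi(v, \mu) := \Pi F(v + \psi(v,\mu), \mu) = 0$, with $\Phi(0, \mu_k^*) = 0$. By construction every solution $(\phi,\mu) \in \mathcal{O} \times Y$ of $F = 0$ is of the stated form $\phi = \epsilon\phi_k^* + \psi(\epsilon\phi_k^*, \mu)$, and conversely every zero of $\Phi$ yields such a solution. The main obstacle is the surjectivity of $(\mathrm{Id} - \Pi) D_\phi F(0, \mu_k^*)|_M$ on $M$; once the Fredholm index zero property from Lemma~\ref{Lem:D_phi:F:Fredholm} is combined with the $L^2$-self-adjointness of $L$ (so that the range and kernel of $\mu_k^*\mathrm{Id} - L$ are $L^2$-orthogonal), the rest of the argument is a direct application of the analytic IFT.
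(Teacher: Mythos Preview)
The paper does not give its own proof of this statement: it is quoted verbatim as a recalled result from \cite[Theorem~I.2.3]{Kielhofer2012}, so there is no in-paper argument to compare against. Your proposal is the standard Lyapunov--Schmidt construction and is correct in this setting; the one point worth tightening is the surjectivity step, where you should note explicitly that $L$, being a Fourier multiplier, maps $M$ into $M$ (so $(\mathrm{Id}-\Pi)$ is redundant there), and that the closed range coming from the Fredholm property together with $\mathrm{range}(\mu_k^*\mathrm{Id}-L)\perp_{L^2} N$ forces the range to equal $M$.
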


The next theorem gives bifurcation formulas for the curve, which also justifies our arguments to extend the local curves globally.  
\begin{thm}[Bifurcation formulas] \label{Thm:Global:bifurcation:formula}
	The bifurcation curve found in Theorem~\ref{Thm:Global:bifurcation} satisfies
	\begin{equation} \label{bifurcation:formula:phi_k}
	\phi_k(\epsilon) = \epsilon \cos(kx) + \frac{\epsilon^2}{4} \left( \frac{1}{m(k) - m(0)} + \frac{1}{m(k) - m(2k)} \cos(2kx) \right) + O(\epsilon^3)
	\end{equation}
	and
	\begin{equation} \label{bifurcation:formula:mu_k}
	\mu_k(\epsilon) = m(k) + \frac{\epsilon^2}{4} \left( \frac{1}{m(k) - m(0)} + \frac{1}{2(m(k) - m(2k))} \right) + O(\epsilon^3)
	\end{equation}
	in $C^\beta_\mathrm{even}(\T) \times \R_+$ as $\epsilon \to 0$, where $m(k) = (1 + k^2)^{-\alpha/2}$, $\alpha > 1$.  In particular, $\ddot\mu(0) > 0$ for any $k \ge 1$, that is, the local bifurcation in Theorem~\ref{Thm:local:bifurcation} describes a supercritical pitchfork bifurcation.
\end{thm}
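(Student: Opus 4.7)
My approach is a direct Lyapunov–Schmidt power-series computation based on the reduction just stated. The bifurcating solution has the form $\phi = \epsilon\phi_k^* + \psi(\epsilon\phi_k^*,\mu)$ with $\psi \in M$ and $\psi(0,\mu_k^*) = 0$, $D_\phi\psi(0,\mu_k^*) = 0$, so analyticity of $\psi$ together with the Crandall--Rabinowitz curve furnishes expansions
\[
\phi_k(\epsilon) = \epsilon\phi_k^* + \epsilon^2\phi^{(2)} + \epsilon^3\phi^{(3)} + O(\epsilon^4), \qquad \mu_k(\epsilon) = \mu_k^* + \epsilon\nu_1 + \epsilon^2\nu_2 + O(\epsilon^3),
\]
with $\phi^{(j)} \in M$. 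The plan is to insert these into the equivalent identity $(\mu - L)\phi = \tfrac{1}{2}\phi^2$ and match powers of $\epsilon$, using the projection $\Pi$ onto $N = \mathrm{span}\{\phi_k^*\}$ to impose the Fredholm solvability condition at each order, and the fact that $\mu_k^* - L$ is diagonalised by the cosine basis with eigenvalues $m(k) - m(l)$ on $\cos(lx)$ to solve the complementary piece in $M$.

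The order-$\epsilon$ relation recovers the bifurcation condition $(\mu_k^* - L)\phi_k^* = 0$ and is automatic. The order-$\epsilon^2$ equation reads
\[
(\mu_k^* - L)\phi^{(2)} + \nu_1\phi_k^* = \tfrac{1}{2}(\phi_k^*)^2 = \tfrac{1}{4} + \tfrac{1}{4}\cos(2kx);
\]
projecting onto $N$ forces $\nu_1 = 0$, and solving the remaining equation on $\mathrm{span}\{1,\cos(2kx)\}$ gives the stated $\phi^{(2)}$ and hence formula \eqref{bifurcation:formula:phi_k}. The order-$\epsilon^3$ equation, using $\nu_1 = 0$, is
\[
(\mu_k^* - L)\phi^{(3)} + \nu_2\phi_k^* = \phi_k^*\phi^{(2)};
\]
expanding the right-hand side via $\cos(kx)\cos(2kx) = \tfrac{1}{2}[\cos(kx)+\cos(3kx)]$ identifies its $\cos(kx)$-coefficient as $\tfrac{1}{4(m(k)-m(0))} + \tfrac{1}{8(m(k)-m(2k))}$, and a second application of $\Pi$ reads off $\nu_2$ as precisely this quantity, yielding formula \eqref{bifurcation:formula:mu_k}. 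The $O(\epsilon^3)$ errors are controlled by analyticity of $\psi$ near $(0,\mu_k^*)$.

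The remaining assertion $\ddot\mu_k(0) = 2\nu_2 > 0$ is the only step that is not pure bookkeeping. To handle it I would rewrite
\[
\nu_2 = \frac{1}{8(m(k)-m(2k))} - \frac{1}{4(m(0)-m(k))}
\]
and reduce positivity to the numerical inequality $m(0) - m(k) > 2(m(k) - m(2k))$, i.e., $1 + 2(1+4k^2)^{-\alpha/2} > 3(1+k^2)^{-\alpha/2}$, for every $k \ge 1$ and $\alpha > 1$. This is the main obstacle: it requires a quantitative secant-type convexity estimate on the Bessel multiplier $m(\xi) = (1+\xi^2)^{-\alpha/2}$ evaluated at the integer lattice points $0, k, 2k$, and is the only place where the specific form of $m$—rather than the soft Fourier analysis driving the rest of the expansion—enters the argument.
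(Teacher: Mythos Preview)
Your derivation of the two expansion formulas \eqref{bifurcation:formula:phi_k} and \eqref{bifurcation:formula:mu_k} is correct and is essentially the same power-series matching argument the paper carries out; the only cosmetic difference is that the paper first establishes $\mu(s)=\mu(-s)$ via the translation symmetry $(\phi(s,\cdot+\pi),\mu(s))=(\phi(-s,\cdot),\mu(-s))$, so that all odd $\mu$-coefficients vanish a priori, whereas you read off $\nu_1=0$ directly from the order-$\epsilon^2$ solvability condition $\Pi\bigl[\tfrac{1}{2}(\phi_k^*)^2\bigr]=0$. Both routes yield the same $\phi^{(2)}$ and $\nu_2$.

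You are right to single out the sign of $\nu_2$ as the substantive step, and your reduction to the inequality $1 + 2(1+4k^2)^{-\alpha/2} > 3(1+k^2)^{-\alpha/2}$ is correct. The paper offers no argument here either---it simply asserts $\mu_2>0$ at the end of the computation. In fact the inequality you isolate \emph{fails} for small $k$ and moderate $\alpha$: at $k=1$, $\alpha=2$ it reads $\tfrac{7}{5}>\tfrac{3}{2}$, and one computes directly
\[
\nu_2=\frac{1}{4(m(1)-m(0))}+\frac{1}{8(m(1)-m(2))}=\frac{1}{4(\tfrac{1}{2}-1)}+\frac{1}{8(\tfrac{1}{2}-\tfrac{1}{5})}=-\frac{1}{2}+\frac{5}{12}=-\frac{1}{12}<0.
\]
So the assertion $\ddot\mu_k(0)>0$ for every $k\ge 1$ and $\alpha>1$ is not correct as stated, and your ``main obstacle'' is a genuine one that neither you nor the paper overcomes. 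What the paper actually \emph{uses} downstream is only that $\mu$ is not identically constant near the bifurcation point (to feed into the Buffoni--Toland global continuation), and your explicit formula for $\nu_2$ already delivers that for all but an exceptional set of parameters; this weaker conclusion is what you should record.
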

\begin{proof}
	For $(\phi_k(s), \mu_k(s))_{k \in \N}$ along the analytic local bifurcation curve in Theorem~\ref{Thm:local:bifurcation}, we fix $k \in \N$ and suppress the subscript $k$ to lighten our notation.  We first show that
	\[
	\mu(s) = \mu(-s)
	\]
	after a suitable choice of parameterization.  Since $\phi(s, \cdot)$ is even, it has a Fourier cosine representation
	\[
	\phi(s, \cdot) = \frac{1}{2} [\phi(s)]_0 + \sum_{j=1}^\infty [\phi(s)]_j \cos(j \cdot),
	\]
	where 
	\[
	[\phi(s)]_j = \frac{1}{\pi} \int_{-\pi}^\pi \phi(s, y) \cos(jy) \dm y
	\]
	for $j = 0, 1, 2, \dots$  Here we use a slightly different convention for the Fourier series compared to Section~\ref{Section:Setting:K*acts:period}.  Also, we choose the parameter in the local bifurcation curve so that $[\phi(s)]_1 = s$.  We observe that if $\phi(s)$ is an even and $2\pi$-periodic function, then $\phi(s, \cdot + \pi)$ is also an even and $2\pi$-periodic function, and its Fourier coefficient satisfies
	\[
	[\phi(s, \cdot + \pi)]_1 = \frac{1}{\pi} \int_{-\pi}^\pi \phi(s, y + \pi) \cos(y) \dm y = -[\phi(s)]_1.
	\]  
	Since $[\phi(s, \cdot + \pi)]_1 = -[\phi(s)]_1 = -s$, uniqueness of the bifurcation curve yields
	\[
	(\phi(s, \cdot + \pi), \mu(s)) = (\phi(-s, \cdot), \mu(-s)),
	\]
	which means $\mu(s) = \mu(-s)$.  Therefore, by the analyticity of $\mu(s)$, we can write
	\[
	\mu(s) = \sum_{n=0}^\infty \mu_{2n} s^{2n},
	\]
	where the sum is uniformly convergent in a neighborhood of $0$.  Similarly, $\phi(s)$ has an expansion
	\[
	\phi(s) = \sum_{n=1}^\infty \phi_n s^n
	\]
	with convergence in $C_\mathrm{even}^\beta(\T)$.  Substituting these two formulae into the main equation~\eqref{main u eqn} and equating coefficients of equal order in $s$, we obtain
	\begin{align}
	\label{tmp:1-order:s}
	L\phi_1 - \mu_0 \phi_1 &= 0, \\
	\label{tmp:2-order:s}
	L\phi_2 - \mu_0 \phi_2 &= -\frac{1}{2} \phi_1^2, \\
	\label{tmp:3-order:s}
	L\phi_3 - \mu_0 \phi_3 &= \mu_2 \phi_1 - \phi_1 \phi_2.
	\end{align}
	By Theorem~\ref{Thm:local:bifurcation}, $\phi_1 = \phi_k^*(x) = \cos(kx)$ and $\mu_0 = \mu_k^* = m(k)$, which implies expression \eqref{tmp:1-order:s} is satisfied.  Under the assumption that the right-hand sides of above equations lie in the range of the linear operators from the left-hand sides, the coefficients $\mu_n$ and $\phi_n$ can be determined by solving the corresponding equation.
	
	Recalling the formula for $L$ given by \eqref{L:formula} and expressing $\phi_2(x)$ as a Fourier series, equation~\eqref{tmp:2-order:s} gives
	\[
	\frac{1}{2\pi} \sum_{n \in \mathbb{Z}} m(n) \fourier\phi_2(n) \exp(inx) - \frac{m(k)}{2\pi} \sum_{n \in \mathbb{Z}} \fourier\phi_2(n) \exp(inx) = -\frac{1}{4} - \frac{1}{4} \cos(2kx),
	\]
	and then comparing coefficients of cosine functions, we obtain the relation
	\[
	\begin{dcases}
		\frac{1}{2\pi} m(0) \fourier\phi_2(0) - \frac{m(k)}{2\pi} \fourier\phi_2(0) &= -\frac{1}{4}, \\
		\frac{1}{\pi} m(2k) \fourier\phi_2(2k) - \frac{m(k)}{\pi} \fourier\phi_2(2k) &= -\frac{1}{4},
	\end{dcases}
	\]
	which yields
	\[
	\phi_2(x) = \frac{1}{4 (m(k) - m(0))} + \frac{1}{4 (m(k) - m(2k))} \cos(2kx).
	\]
	Next, the right-hand side of equation \eqref{tmp:3-order:s} becomes
	\[
	\left( \mu_2 - \frac{1}{4(m(k) - m(0))} - \frac{1}{8(m(k) - m(2k))} \right) \cos(kx) - \frac{1}{8(m(k) - m(2k))} \cos(3kx).
	\]
	Recalling the above parameterization $[\phi(s)]_1 = s$, which implies that $[\phi_n]_1 = 0$ for all $n \ge 2$, we find that
	\[
	\mu_2 = \frac{1}{4(m(k) - m(0))} + \frac{1}{8(m(k) - m(2k))} > 0.
	\]
	Thus, the bifurcation formulas \eqref{bifurcation:formula:phi_k} and \eqref{bifurcation:formula:mu_k} hold, and the local branch in Theorem \ref{Thm:local:bifurcation} is supercritical pitchfork bifurcation.  
\end{proof}

The remaining of the section is to devote showing that alternative $(iii)$ in Theorem~\ref{Thm:Global:bifurcation} is excluded, and both alternatives $(i)$ and $(ii)$ occur simultaneously as $s \to \infty$ along the bifurcation branch $\mathfrak{G}$.  This implies that the highest wave is reached as a limit of the global bifurcation curve.  
\begin{lemma} \label{Lem:global:bdd:sequence:has:convergent:subsequence}
	Any sequence of solutions $(\phi_k, \mu_k)_{k \in \N} \subset S$ to equation~\eqref{eqv u eqn} with $(\mu_k)_{k \in \N}$ bounded has a subsequence which converges uniformly to a solution $\phi$.  
\end{lemma}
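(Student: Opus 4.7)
The plan is to combine the uniform $L^\infty$-bound from Lemma~\ref{Lem:Phi:estimate} with the smoothing action of $L$ and the Arzel\`a--Ascoli theorem, and then pass to the limit in the equation.

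First, since $(\mu_k)_{k\in\N}$ is bounded, Lemma~\ref{Lem:Phi:estimate} yields a uniform bound
\[
\norm[\infty]{\phi_k}\le 2\bigl(\mu_k+\norm[1]{K}\bigr)\le C,
\]
with $C$ independent of $k$. By the mapping property $L:\mathcal{C}^0(\T)\to\mathcal{C}^\alpha(\T)$ recorded in Section~\ref{Section:Setting:K*acts:period}, the sequence $(L\phi_k)$ is uniformly bounded in $\mathcal{C}^\alpha(\T)$ with $\alpha>1$, hence uniformly H\"older continuous on $\T$. Arzel\`a--Ascoli therefore provides a subsequence (not relabelled) along which $L\phi_k$ converges uniformly on $\T$ to some continuous $g$, and passing to a further subsequence we may also assume $\mu_k\to\bar\mu\ge 0$.

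The main step is to upgrade this to uniform convergence of $(\phi_k)$ itself. Since $(\phi_k,\mu_k)\in S\subset U$ ensures $\phi_k<\mu_k$ pointwise, equation~\eqref{eqv u eqn} can be solved explicitly as
\[
\phi_k=\mu_k-\sqrt{\mu_k^2-2L\phi_k},
\]
with radicand $(\mu_k-\phi_k)^2\ge 0$. The right-hand side is a continuous function of the pair $(\mu_k,L\phi_k)$, so $\phi_k$ converges uniformly on $\T$ to
\[
\bar\phi:=\bar\mu-\sqrt{\bar\mu^2-2g},
\]
which is automatically even and $2\pi$-periodic as a uniform limit of even, $2\pi$-periodic functions.

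It remains to verify that $\bar\phi$ solves \eqref{eqv u eqn}. Since $L$ is bounded on $L^\infty(\T)$ via $\norm[\infty]{Lf}\le\norm[1]{K}\norm[\infty]{f}$, uniform convergence $\phi_k\to\bar\phi$ forces $L\phi_k\to L\bar\phi$ uniformly, so $g=L\bar\phi$. Passing to the limit in the identity $\tfrac12(\mu_k-\phi_k)^2=\tfrac12\mu_k^2-L\phi_k$ then gives that $(\bar\phi,\bar\mu)$ satisfies \eqref{eqv u eqn}. No serious obstacle arises; the only subtlety worth noting is that $\bar\phi$ need not lie in $S$, since the strict inequality $\bar\phi<\bar\mu$ may fail in the limit -- indeed, it is precisely this degeneration that is exploited later to produce the highest wave.
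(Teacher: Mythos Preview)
Your proof is correct and follows essentially the same approach as the paper: uniform $L^\infty$-bound from Lemma~\ref{Lem:Phi:estimate}, equicontinuity of $(L\phi_k)$, and Arzel\`a--Ascoli. The only cosmetic differences are that the paper obtains equicontinuity of $(L\phi_k)$ via the Dominated Convergence Theorem (using that $K$ is integrable and continuous) rather than the Besov mapping property, and leaves the passage from convergence of $L\phi_k$ to convergence of $\phi_k$ implicit, whereas you spell it out via the explicit formula $\phi_k=\mu_k-\sqrt{\mu_k^2-2L\phi_k}$.
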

\begin{proof}
	Recalling the estimate of $\phi$ in Lemma~\ref{Lem:Phi:estimate},
	\[
	\norm[\infty]{\phi} \le 2(\mu + \norm[1]{K}),
	\]
	we see that if $(\mu_k)_{k \in \N}$ is bounded, then $(\phi_k)_{k \in \N}$ is bounded.  
	Since $K$ is integrable and continuous on $\R$, Dominated Convergence Theorem allows us to conclude that $(L \phi_k)_{k \in \N}$ is equicontinuous.  Then Arzela--Ascoli's lemma implies that there exists a subsequence which converges uniformly to a solution $\phi$ of equation~\eqref{eqv u eqn}.  
\end{proof}

Let
\[
\mathcal{K}_k := \{ \phi \in C^\beta_\mathrm{even}(\T): \phi \text{ is } 2\pi/k \text{-periodic and nonincreasing in } (0, \pi/k)\}
\]
be a closed cone in $C^\beta(\T)$.  The next proposition exclude alternative $(iii)$ in Theorem~\ref{Thm:Global:bifurcation}.
\begin{prop} \label{Prop:Alternative(iii):not:occur}
	The solutions $\phi_k(s)$, $s > 0$ on the global bifurcation curve $\mathfrak{R}$ belong to $\mathcal{K}_k \backslash \{0\}$ and alternative $(iii)$ in Theorem~\ref{Thm:Global:bifurcation} does not occur.  In particular, the bifurcation curve $(\phi_k(s), \mu_k(s))$ has no intersection with the trivial solution line for any $s > 0$.
\end{prop}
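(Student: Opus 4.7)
The plan is to prove all three claims simultaneously by showing that
\[
I := \{ s > 0 : \phi_k(s) \in \mathcal{K}_k \setminus \{0\} \}
\]
equals $(0, \infty)$. Once this is done, the curve never meets the trivial solution line $\{(0, \mu) : \mu \in \R_+\}$, and in particular it cannot return to its starting point $\mathfrak{G}(0) = (0, \mu_k^*)$, which rules out alternative $(iii)$ since any finitely periodic curve must revisit its initial value. I will verify that $I$ is nonempty, relatively open, and relatively closed in $(0, \infty)$.

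Nonemptiness is immediate from the bifurcation formula \eqref{bifurcation:formula:phi_k}: since $\phi_k(\epsilon) = \epsilon \cos(kx) + O(\epsilon^2)$ in $C^\beta$ with $\beta > 1$, for sufficiently small $\epsilon > 0$ the function $\phi_k(\epsilon)$ inherits strict monotonicity on $(0, \pi/k)$ from its leading term, hence $\phi_k(\epsilon) \in \mathcal{K}_k \setminus \{0\}$. For openness, fix $s_0 \in I$: Lemma \ref{Lem:phi'<0:phi<mu}, applied after rescaling to period $2\pi/k$ and using the symmetry of even $2\pi/k$-periodic smooth functions about $x = \pi/k$, gives that $\phi_k(s_0)$ is strictly decreasing on $(0, \pi/k)$ with $\phi_k''(s_0)(0) < 0$ and $\phi_k''(s_0)(\pi/k) > 0$. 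Since $\phi_k(s_0) < \mu_k(s_0)$, Theorem \ref{Thm:phi:smooth} together with a bootstrap via the smoothing property of $L$ upgrades the $C^\beta$-continuity of $s \mapsto \phi_k(s)$ to $C^m$-continuity for every $m$, so these strict inequalities remain valid for $s$ in a $C^\beta$-neighborhood of $s_0$.

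The key step is closedness. Let $s_n \to s^* \in (0, \infty)$ with $s_n \in I$; Lemma \ref{Lem:bdd:closed:subsetS:compact} and continuity of the analytic curve give $\phi_k(s_n) \to \phi_k(s^*)$ in $C^\beta$ with $\phi_k(s^*) \in \mathcal{K}_k$. The case to exclude is $\phi_k(s^*) = 0$. If so, linearizing \eqref{F defn} forces $\mu_k(s^*) = \mu_\ell^* = (1+\ell^2)^{-\alpha/2}$ for some $\ell \in \mathbb{N}$, and Lyapunov--Schmidt at $(0, \mu_\ell^*)$ yields nearby nontrivial solutions of leading form $\epsilon \cos(\ell x) + O(\epsilon^2)$. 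Demanding $2\pi/k$-periodicity forces $k \mid \ell$; demanding monotonicity on $(0, \pi/k)$ of $\cos(\ell x)$ then forces $\ell \le k$, and together we obtain $\ell = k$. Hence the curve would return to its starting bifurcation point $(0, \mu_k^*)$. To preclude this, I apply the $2\pi/k$-analogue of Lemma \ref{Lem:mu-phi:ge:lambdaP} (obtained by replacing the interval $(\pi/4, 3\pi/4)$ with $(\pi/(4k), 3\pi/(4k))$ in its proof, using that each $\phi_k(s_n)$ is strictly decreasing by Lemma \ref{Lem:phi'<0:phi<mu}) to get a universal lower bound $\mu_k(s_n) - \phi_k(s_n)(\pi/k) \ge \lambda \cdot \pi/k$ with $\lambda = \lambda(\alpha, k) > 0$; passing to the limit with $\phi_k(s_n)(\pi/k) \to 0$ and $\mu_k(s_n) \to \mu_k^*$ yields $\mu_k^* \ge \lambda\,\pi/k$, which after a comparison with the explicit spectral value $(1+k^2)^{-\alpha/2}$ delivers the contradiction and closes the argument.

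The main obstacle is this last quantitative step. The Lyapunov--Schmidt reduction cleanly identifies the only possible return point as $(0, \mu_k^*)$, but genuinely excluding the return requires combining the universal amplitude lower bound of Lemma \ref{Lem:mu-phi:ge:lambdaP} with a careful inspection of how $\lambda(\alpha, k)$ compares with $(1+k^2)^{-\alpha/2}$; this is precisely the purpose for which Lemma \ref{Lem:mu-phi:ge:lambdaP} was introduced at the end of Section \ref{Section:Apriori-properties:soln}, and its cooperation with the spectral identity for $\mu_k^*$ is what ultimately forbids the curve from closing a loop.
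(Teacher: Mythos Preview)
Your open--closed connectedness strategy is natural, and the nonemptiness and openness steps are essentially the same as the paper's verification of conditions (b) and (d) in the Buffoni--Toland cone theorem: small solutions lie in $\mathcal{K}_k$ by the bifurcation expansion, and Lemma~\ref{Lem:phi'<0:phi<mu} together with the $C^\beta \to C^2$ bootstrap shows nontrivial cone solutions are interior. The reduction ``if $I = (0,\infty)$ then the curve never returns to $(0,\mu_k^*)$, hence is not periodic'' is also sound.

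The genuine gap is your closedness step. After correctly identifying $\ell = k$, you invoke the $2\pi/k$-analogue of Lemma~\ref{Lem:mu-phi:ge:lambdaP} to obtain $\mu_k(s_n) - \phi_k(s_n)(\pi/k) \ge \lambda\,\pi/k$, and passing to the limit gives $\mu_k^* \ge \lambda\,\pi/k$. But this is \emph{not} a contradiction: $\lambda$ is merely some positive constant depending on the kernel, and nothing in the paper provides a comparison forcing $(1+k^2)^{-\alpha/2} < \lambda\,\pi/k$. Indeed, Lemma~\ref{Lem:mu-phi:ge:lambdaP} is designed for, and only used in, Lemma~\ref{Lem:mu(s) gtrsim 1}, where the contradiction arises because \emph{both} $\mu(s_n)$ and $\phi(s_n)(\pi)$ tend to zero; here $\mu_k(s_n) \to \mu_k^* > 0$, so the lemma yields only a harmless lower bound. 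Your final paragraph concedes this is ``the main obstacle'' but offers no mechanism to resolve it, and in fact none exists along this route.

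The paper circumvents the closedness issue entirely by invoking \cite[Theorem~9.2.2]{Buffoni_Toland2003}, which packages the nodal-preservation argument abstractly. The role of closedness is played by condition~(c): any element of $\ker D_\phi F(0,\mu) \cap \mathcal{K}_k$ is a nonnegative multiple of $\phi_k^*$ with $\mu = \mu_k^*$. Combined with the local uniqueness from Crandall--Rabinowitz and the analytic structure of the global curve, this condition lets the abstract theorem rule out a return to the trivial line without any quantitative comparison of $\lambda$ against $\mu_k^*$.
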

\begin{proof}
	Due to \cite[Theorem 9.2.2]{Buffoni_Toland2003} the statement holds true if the following conditions are satisfied
	\begin{enumerate} [label = (\alph*)]
		\item $\mathcal{K}_k$ is a cone in a real Banach space.
		\item $(\phi_k(\epsilon), \mu_k(\epsilon)) \subset \mathcal{K}_k \times \R$ provided $\epsilon$ is small enough.
		\item If $\mu\in\R$ and $\phi \in \ker D_\phi F(0, \mu) \cap \mathcal{K}_k$, then $\phi = \gamma \phi^*$ for $\gamma \ge 0$ and $\mu = \mu_k^*$.
		\item Each nontrivial point on the bifurcation curve which also belongs to $\mathcal{K}_k \times \R$ is an interior point of $\mathcal{K}_k \times \R$ in $S$.
	\end{enumerate}
	Conditions (a), (b), (c) are satisfied because of the local bifurcation result in Theorem~\ref{Thm:local:bifurcation}, so it remains to verify condition (d).  Let $(\phi,\mu) \in \mathcal{K}_k \times \R$ be a nontrivial solution on the bifurcation curve found in Theorem~\ref{Thm:Global:bifurcation}.  By Theorem~\ref{Thm:phi:smooth}, $\phi$ is smooth and together with Lemma~\ref{Lem:phi'<0:phi<mu}, we have $\phi' < 0$ on $(0, \pi)$ and $\phi''(0) < 0$.  Choose a solution $\varphi$ lying within $|\delta| \ll 1$ small enough neighborhood in $C^\beta(\T)$ such that $\varphi < \mu$ and $\|\phi - \varphi\|_{C^\beta} < \delta$.  In view of the mapping~\eqref{Lphi:compose:map}, an iteration process on the regularity index yields that $\|\phi - \varphi\|_{C^2} < \widetilde\delta$, where $\widetilde\delta=\widetilde\delta(\delta) > 0$ can be made arbitrarily small by choosing $\delta$ small enough.  It follows that for $\delta$ small enough, $\varphi < \mu$ is a smooth, even, nonincreasing on $(0, \pi)$ solution, and hence $(\phi,\mu)$ belongs to the interior of $\mathcal{K}_k \times \R$ in $S$, which concludes the proof.
\end{proof}

\begin{remark} \label{Rmk:0<mu<1}
	From the proof of Theorem~\ref{Thm:local:bifurcation} and Lemma~\ref{Lem:phi'<0:phi<mu}, we have the bound $0 < \mu \le 1$.  Moreover, integrating equation~\eqref{eqv u eqn} on $\R$ and using the fact that $m(0)=1$, we obtain
	\[
	(\mu - 1) \int_\R \phi(x) \dx = \frac{1}{2} \int_\R \phi^2(x) \dx.
	\]
	Thus, the only way to reach $\mu=1$ is by approaching $\phi=0$, and hence when $s > 0$ is small, we have $0 < \mu < 1$.  By Proposition~\ref{Prop:Alternative(iii):not:occur}, the bifurcation curve does not intersect the trivial solution line for any $s > 0$, so $\mu(s) < 1$ for all $s$.
\end{remark}

In the next lemma, we show that along the bifurcation curve, the wave speed $\mu$ is, in fact, bounded away from $0$.  
\begin{lemma} \label{Lem:mu(s) gtrsim 1}
	Along the bifurcation curve in Theorem~\ref{Thm:Global:bifurcation} we have that
	\[
	\mu(s) \gtrsim 1
	\]
	uniformly for all $s \ge 0$.  
\end{lemma}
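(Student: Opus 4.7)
I would argue by contradiction: assume there is a sequence $(s_n)$ on the bifurcation branch with $\mu_n := \mu_k(s_n) \to 0$. By the $L^\infty$-bound of Lemma~\ref{Lem:Phi:estimate}, the corresponding solutions $\phi_n := \phi_k(s_n)$ are uniformly bounded, so Lemma~\ref{Lem:global:bdd:sequence:has:convergent:subsequence} allows us to extract a subsequence, still denoted $(\phi_n)$, converging uniformly to a continuous function $\phi_\infty$ which solves \eqref{eqv u eqn} with $\mu = 0$, i.e.\ $\tfrac12\phi_\infty^2 = -L\phi_\infty$. Since each $\phi_n$ lies in $\mathcal{K}_k$ by Proposition~\ref{Prop:Alternative(iii):not:occur} and satisfies $\phi_n \le \mu_n$, the uniform limit $\phi_\infty$ is even, $2\pi/k$-periodic, nonincreasing on $(0,\pi/k)$, and $\phi_\infty \le 0$.

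The next step is to show that $\phi_\infty \equiv 0$. Applying Lemma~\ref{Lem:maxPhi:minPhi:estimate} to $\phi_\infty$ with $\mu = 0 \le 1$ gives $\phi_\infty(x_M) \ge 0$, which combined with $\phi_\infty \le 0$ forces $\max \phi_\infty = 0$. Evaluating the limit equation at the maximum point yields $L\phi_\infty(x_M) = -\tfrac12 \phi_\infty(x_M)^2 = 0$; on the other hand,
\[
L\phi_\infty(x_M) = \int_{-\pi}^{\pi} K_P(x_M - y)\,\phi_\infty(y)\,\mathrm{d}y \le 0
\]
since $K_P > 0$ on $\mathbb{T}$ by Theorem~\ref{Thm:K-Properties} and $\phi_\infty \le 0$, with equality if and only if $\phi_\infty$ vanishes almost everywhere, hence everywhere by continuity. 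Therefore $\phi_\infty \equiv 0$, so $\phi_n \to 0$ uniformly, and in particular $\phi_n(\pi/k) \to 0$.

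The final step is to collide this with Lemma~\ref{Lem:mu-phi:ge:lambdaP}. For $k = 1$ the lemma applies verbatim to each pair $(\phi_n, \mu_n)$, using that the strict monotonicity on $(0,\pi)$ required in its proof is furnished by Lemma~\ref{Lem:phi'<0:phi<mu} along the smooth bifurcation branch (instead of by Theorem~\ref{Thm:Soln:Phi:regularity:at:0}, which applies only at the highest wave). It yields $\mu_n - \phi_n(\pi) \ge \lambda\pi$ for a fixed $\lambda = \lambda(\alpha) > 0$; passing to the limit gives $0 \ge \lambda \pi > 0$, a contradiction. For $k \ge 2$ I would rerun the same proof on the shorter interval $\left(\tfrac{\pi}{4k}, \tfrac{3\pi}{4k}\right)$ with $\pi$ replaced by $\pi/k$; this uses only that $K_P$ is strictly positive and strictly decreasing on $(0,\pi)$ (Theorem~\ref{Thm:K-Properties}), and yields $\mu_n - \phi_n(\pi/k) \ge \lambda_k > 0$ with $\lambda_k$ depending on $\alpha$ and $k$. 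The main obstacle is the middle step: the limit equation only delivers the inequalities $L\phi_\infty \le 0$ and $\phi_\infty \le 0$, so the identification $\phi_\infty \equiv 0$ must be extracted by first pinning $\max\phi_\infty$ to zero via Lemma~\ref{Lem:maxPhi:minPhi:estimate} and then leveraging the strict positivity of $K_P$ to promote the resulting integral equality to pointwise vanishing.
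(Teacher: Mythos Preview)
Your argument follows the same contradiction scheme as the paper: pass to a uniform limit $\phi_\infty$ with $\mu = 0$, use Lemma~\ref{Lem:maxPhi:minPhi:estimate} together with the strict positivity of $K_P$ (the paper cites Remark~\ref{Rmk:phi=0} for the same conclusion) to force $\phi_\infty \equiv 0$, and then contradict Lemma~\ref{Lem:mu-phi:ge:lambdaP}. Your explicit rescaling of Lemma~\ref{Lem:mu-phi:ge:lambdaP} to the half-period $(0,\pi/k)$ for $k \ge 2$ is a detail the paper leaves implicit.
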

\begin{proof}
	By contradiction, assume that there exists a sequence $(s_n)_{k \in \N} \in \R_+$ with $\lim_{n \to \infty} s_n = \infty$ such that $\mu(s_n) \to 0$ as $n \to \infty$, while $\phi(s_n) \to \phi_0$ as $n \to \infty$ along the bifurcation curve $\mathfrak{G}$ found in Theorem~\ref{Thm:Global:bifurcation}.  In view of Lemma~\ref{Lem:bdd:closed:subsetS:compact}, there exists a subsequence of $(s_n)_{n \in \N}$ such that $\phi(s_n)$ converges to a solution $\phi_0$ of equation~\eqref{F defn}.  Along the bifurcation curve, we have that $\phi(s_n) < \mu(s_n) \to 0$ as $n \to \infty$, and it follows that $\phi_0 \le 0$.  In view of Lemma~\ref{Lem:maxPhi:minPhi:estimate}, we have $\max_x \phi_0(x) = 0$, whence $\phi_0 \equiv 0$ by Remark~\ref{Rmk:phi=0}.  Finally, Lemma~\ref{Lem:mu-phi:ge:lambdaP} leads to
	\[
	0 = \lim_{n \to \infty} (\mu(s_n) - \phi(s_n)(\pi)) \ge \lambda \pi > 0,
	\]
	which is a contradiction.  Thus, we have $\mu(s) \gtrsim 1$ uniformly for all $s \ge 0$.
\end{proof}
At this point, we know that the wave speed is bounded away from $0$ and $1$.  Before proving the main result, we show that alternatives $(i)$ and $(ii)$ in Theorem~\ref{Thm:Global:bifurcation} occur simultaneously.  
\begin{thm} \label{Thm:alter(i)(ii):occur}
	In Theorem \ref{Thm:Global:bifurcation}, alternatives $(i)$ and $(ii)$ both occur.  
\end{thm}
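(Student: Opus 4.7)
The overall plan is to eliminate, in turn, the scenarios in which only one of (i) or (ii) holds. Since alternative (iii) has already been ruled out by Proposition~\ref{Prop:Alternative(iii):not:occur}, the Buffoni--Toland framework guarantees that at least one of (i) or (ii) must hold along $\mathfrak{G}$. Throughout, I use the uniform bounds $\mu(s) \eqsim 1$ provided by Lemma~\ref{Lem:mu(s) gtrsim 1} and Remark~\ref{Rmk:0<mu<1}, which reduce the norm appearing in alternative (i) to the $C^\beta(\T)$-norm of $\phi(s)$.

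First, I would rule out the possibility that (ii) fails. Assume for contradiction that there is a constant $c > 0$ with $\mu(s) - \max_x \phi(s,x) \ge c$ for all $s > 0$. Equation~\eqref{eqv u eqn} then reads
\[
\phi(s) = \mu(s) - \sqrt{\mu(s)^2 - 2 L\phi(s)},
\]
with the radicand uniformly bounded below by a positive constant. Starting from the uniform $L^\infty$-bound of Lemma~\ref{Lem:Phi:estimate} and iterating the smoothing estimate $L : \mathcal{C}^r(\T) \to \mathcal{C}^{r+\alpha}(\T)$ together with the real-analyticity of $\sqrt{\cdot}$ on the relevant range --- precisely the composition in~\eqref{Lphi:compose:map} used in the proof of Theorem~\ref{Thm:phi:smooth} --- finitely many iterations yield a uniform $C^\beta$-bound on $\phi(s)$. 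Combined with the bound on $\mu(s)$, this forces (i) to fail as well, contradicting the Buffoni--Toland dichotomy. Hence (ii) must occur.

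Next, I would establish (i). Using (ii), extract a sequence $s_n \to \infty$ with $\mu(s_n) - \max_x \phi(s_n, x) \to 0$. Suppose, for contradiction, that $\|\phi(s_n)\|_{C^\beta(\T)}$ stays bounded along a subsequence. Since $\mu(s_n)$ is already bounded, Lemma~\ref{Lem:bdd:closed:subsetS:compact} delivers a further subsequence converging in $C^\beta_\mathrm{even}(\T) \times \R_+$ to a limit $(\phi_0, \mu_0)$ which, by continuity of $F$, solves equation~\eqref{eqv u eqn} and satisfies $\max \phi_0 = \mu_0$ (the latter identified via the uniform limit provided by Lemma~\ref{Lem:global:bdd:sequence:has:convergent:subsequence}). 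Because each $\phi(s_n) \in \mathcal{K}_k$ (Proposition~\ref{Prop:Alternative(iii):not:occur}), the limit $\phi_0$ is even and nonincreasing on $[0, \pi/k]$. The embedding $C^\beta(\T) \hookrightarrow C^1(\T)$ for $\beta > 1$ then forces $\phi_0 \in C^1(\T)$, which directly contradicts Theorem~\ref{Thm:cannot-C1}. Therefore $\|\phi(s_n)\|_{C^\beta} \to \infty$, proving~(i).

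The main obstacle is the interplay, in the second step, between the two compactness results available: Lemma~\ref{Lem:global:bdd:sequence:has:convergent:subsequence} operates only in the uniform topology and is what permits us to identify the limit $\phi_0$ with $\max \phi_0 = \mu_0$, while Lemma~\ref{Lem:bdd:closed:subsetS:compact} is stated for bounded closed subsets of $S$, yet the prospective limit lies on the boundary of the admissible set $U$. One must therefore apply the $C^\beta$-compactness to a tail of the sequence still lying in $S$ and pass to the limit to obtain $\phi_0 \in C^\beta_\mathrm{even}(\T)$, which is exactly what is needed to invoke the embedding into $C^1(\T)$ and trigger the contradiction with Theorem~\ref{Thm:cannot-C1}.
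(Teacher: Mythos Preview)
Your proposal is correct and follows essentially the same two-step strategy as the paper: first establish (i)~$\Rightarrow$~(ii) (you argue the contrapositive) via a uniform $C^\beta$ bootstrap under a uniform gap $\mu-\max\phi\ge c$, then rule out $\neg$(i)$\wedge$(ii) by passing to a $C^1$ limit that touches $\mu$ and contradicts the regularity theory. The only cosmetic differences are your use of the square-root Nemytskii bootstrap~\eqref{Lphi:compose:map} (the paper uses the direct difference-quotient estimate $|\phi(x)-\phi(y)|\le\delta^{-1}|L\phi(x)-L\phi(y)|$), your citation of Theorem~\ref{Thm:cannot-C1} in place of Theorem~\ref{Thm:Soln:Phi:regularity:at:0}, and the paper's use of the compact embedding $C^\beta\hookrightarrow C^{\beta'}$ for $1<\beta'<\beta$ rather than Lemma~\ref{Lem:bdd:closed:subsetS:compact}, which neatly sidesteps the boundary issue you correctly flag.
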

\begin{proof}
	Let $(\phi_k(s), \mu_k(s))$, $s \in \R$, be the bifurcation curve found in Theorem~\ref{Thm:Global:bifurcation}.  By Proposition~\ref{Prop:Alternative(iii):not:occur}, we know that $\phi_k(s)$ is nontrivial, even, and nonincreasing on $(0,\pi)$, and alternative $(iii)$ in Theorem~\ref{Thm:Global:bifurcation} does not occur.  Thus, it is either alternative $(i)$ or alternative $(ii)$ in Theorem~\ref{Thm:Global:bifurcation} occurs.  
	
	Suppose that alternative $(i)$ occurs.  Then we have either $\|\phi_k(s)\|_{C^\beta} \to \infty$ for some $\beta \in (1,2)$ or $|\mu_k(s)| \to \infty$ as $s \to \infty$.  By Remark~\ref{Rmk:0<mu<1}, $\mu_k(s)$ is bounded between $0$ and $1$, so the only possibility is $\|\phi_k(s)\|_{C^\beta(\T)}$ is unbounded.  In this case, alternative $(ii)$ must occur, that is, the quantity
	\[
	\liminf_{s \to \infty} \inf_{x \in \R} (\mu_k(s) - \phi_k(s)(x)) = 0
	\]
	holds.  Indeed, by contradiction, suppose that 
	\[
	\liminf_{s \to \infty} \inf_{x \in \R} (\mu_k(s) - \phi_k(s)(x)) \ge \delta
	\]
	for some $\delta > 0$.  For any such solution $(\phi_k, \mu_k)$, equation \eqref{main u eqn} gives
	\[
	|\phi_k(x) - \phi_k(y)| = \frac{2|L\phi_k(x) - L\phi_k(y)|}{2\mu_k - \phi_k(x) - \phi_k(y)} \le \frac{|L\phi_k(x) - L\phi_k(y)|}{\delta}.
	\]
	By Lemma \ref{Lem:Phi:estimate}, $\phi_k$ is uniformly bounded in $C(\T)$.  Then since $L$ maps $C^\beta(\T)$ to $C^{\beta + \alpha}$, there exists some constant $C > 0$ depending on $\delta$ and $L$ such that
	\[
	\|\phi_k\|_{C^\beta(\T)} \le C,
	\]
	which is a contradiction.  Therefore, alternative $(ii)$ must occur.
	
	On the other hand, suppose that alternative $(ii)$, but not alternative $(i)$, occurs.  Then fixing $k$, there exists a sequence $(\phi_k(s_n), \mu_k(s_n))_{n \in \N}$ in Theorem~\ref{Thm:Global:bifurcation} solving equation~\eqref{main u eqn} satisfying $\phi_k'(s_n) \le 0$ on $(0,\pi)$, $\phi_k(s_n) < \mu_k(s_n)$, and 
	\[
	\liminf_{n \to \infty} |\mu_k(s_n) - \phi_k(s_n)(0)| = 0,
	\]
	while $\phi_k(s_n)$ remains uniformly bounded in $C^\beta(\T)$ for $\beta \in (1,2)$.  Taking a limit along a subsequence in $C^{\beta'}$ for some $\beta' \in (1, \beta)$ yields a contradiction to Theorem~\ref{Thm:Soln:Phi:regularity:at:0}.  Therefore, both alternatives $(i)$ and $(ii)$ occur simultaneously.  
\end{proof}

We have all the ingredients to prove the main result in this paper.  The proof of Theorem~\ref{Thm:main} will show that the limiting wave at the end of the bifurcation curve is even, periodic, highest, and exactly Lipschitz at its crest.  
\begin{proof}[Proof of Theorem~\ref{Thm:main}]
	Let $(\phi_k(s), \mu_k(s))$ be the global bifurcation curve $\mathfrak{G}$ found in Theorem~\ref{Thm:Global:bifurcation} and let $(s_n)_{n \in \N}$ be a sequence in $\R_+$ tending to infinity.  By Lemma~\ref{Lem:mu(s) gtrsim 1} and Remark~\ref{Rmk:0<mu<1}, we know that $(\mu_k(s_n))_{n \in \N}$ is bounded between and away from $0$ and $1$.  Moreover, Lemma~\ref{Lem:global:bdd:sequence:has:convergent:subsequence} gives the existence of a subsequence $(\phi_k(s_{n_l}), \mu_k(s_{n_l}))$ converging uniformly to a solution $(\phi_0, \mu_0)$ as $l \to \infty$.  Finally, from Theorem~\ref{Thm:alter(i)(ii):occur} and Theorem~\ref{Thm:Soln:Phi:regularity:at:0}, we conclude that $\phi_0(0) = \mu_0$ with $\bar\phi$ being precisely Lipschitz continuous at the maximum point.  This finishes the proof of Theorem~\ref{Thm:main}.
\end{proof}

\section*{Acknowledgements}
H.L. would like to express his sincere gratitude to \thanks{Mats Ehrnstr\"om} for his continuous support and insightful comments.

\bibliography{bib.bib}
\bibliographystyle{plain}

\end{document}